\newtheorem{theorem}{Theorem}[section]
\newtheorem{prop}{Proposition}[section]
\newtheorem{cor}[theorem]{Corollary}
\numberwithin{equation}{section}
\newcommand{{\tlg}}{\tilde\gamma }
\newcommand{{\tlG}}{\tilde\Gamma }
\newcommand {{\tlJ}}{\tilde J }
\newcommand{\mpM}{{\mathcal P}M}
\def\slash#1{\, /\kern-0.6em{#1}}
\begin{document}

\title{Geodesics on path spaces and double category}

\author{Saikat Chatterjee}
\address{{\rm  School of Mathematics\\
The Indian Institute of Science Education and Research\\ 
Thiruvananthapuram\\
CET campus, Kerala-695016\\
India}}
\email{saikat.chat01@gmail.com}

\date{25th August, 2015}

\title{Geodesics on path spaces and a double category}


\subjclass[2010]{53C22, 58E10, 53B21 }

\keywords{path space, geodesics, normal neighbourhood, back-track equivalence, double category}

\begin{abstract}
Let $M$ be a Riemannian manifold  and $\mpM$ be the space of all smooth paths on $M$. We describe geodesics 
on path space $\mpM$. Normal neighbourhoods on $\mpM$ have been discussed. We identify paths on $M$ under
``{\em back-track}'' equivalence. Under this identification we show that if
$M$ is complete, then geodesics on the path space yield a double category. This double category has a natural
interpretation in terms of the world sheets generated by freely moving (without any external force) strings. 
\end{abstract}
\maketitle
\section{Introduction}

Let $M$ be a Riemannian manifold. We define the path space $\mpM$ over $M$ to be $C^{\infty}([0, 1], M).$
 The manifold structure on path space has been explored
in~\cite{Michor}[Chapter 10] \cite{Eliasson}[ Theorem 10.4]. We do not address the issue of 
manifold structure on $\mpM$. This paper mainly concerns with the geodesics on the 
path space over a Riemannian manifold and  a double category defined by the geodesics on $\mpM$.

Section~\ref{s:vectcon} is expository, we mostly review  known results to set up our terminologies and notations. In fact, 
a discussion with a more general framework  is available in~ \cite{freed}, \cite{spera}. We introduce an $L^2$ metric~\cite{wurzbacher, biswas-chat, munoz}
given by $${\widetilde g}(X, Y)(\gamma):=\int_0^{1}g_{\gamma}(X(t), Y(t))dt,$$
 on the path space, where $g$ is a metric on manifold $M,$ $\gamma\in \mpM$ and $X, Y\in T_{\gamma}\mpM$ (naturally identified with vector fields along
$\gamma$). A covariant connection on $\mpM$ is defined by point-wise evaluation of a covariant connection on $M$. It follows that 
on the path space, a geodesic is uniquely determined by specifying a path $\gamma\in \mpM$ and a vector field along
$\gamma$.  Proposition~\ref{prop:completeness} [Corollary A.4, \cite{freed}] shows if ambient space $M$ is complete with respect 
to a Riemannian connection, then the path space $\mpM$ is also complete with respect to the corresponding  induced connection on the path space. We also 
discussed the exponential map on the path space.

In section~\ref{s:metric} we introduce a distance function on $\mpM$. A consequence of the construction in section~\ref{s:vectcon} is that the 
exponential map $\rm Exp$ on $\mpM$ is given by 
$$\left( {\rm Exp}_{\gamma}(X)\right)(t)={\rm exp}_{\gamma(t)}X(t),\qquad \forall t\in [0,1],$$
where $\rm exp$ is the exponential map on $M$ and other notations have obvious meaning. Thus, the normal neighbourhood on $\mpM$  
is as described in Proposition~\ref{prop:existencenormalne}. In Theorem~\ref{theo:normalneighbour} we prove that any $\gamma_1, \gamma_2\in {\mathcal U_{\gamma_0}}$ can be joined  by a unique minimizing geodesic and ${\mathcal U_{\gamma_0}}$ is convex (with respect to the distance function on $\mpM$), where ${\mathcal U_{\gamma_0}}$ is a normal neighbourhood around $\gamma_0\in \mpM$.  

A prominent direction of enquiry in the area of parallel transport on path spaces has been in terms of higher categories; works in this direction include \cite{Awody, BaezSchr, BaezSchr2, AbbasWag, saikat3, saikat4} and many others. For instance, in~\cite{saikat3} a connection has been defined on the principal bundle over the path space, then it has been shown that horizontal lifting of  paths on the path space result in a double category. The intuitive  reason behind appearence of higher categories in this context is as follows. Since a path on the path space $\Gamma:[a,b]\rightarrow \mpM$ is essentially a ``surface'' 
\begin{eqnarray}
&\Gamma:&[a, b]\times [c, d]\rightarrow M\nonumber\\
&&(s, t)\mapsto \Gamma(s,t)\nonumber
\end{eqnarray}
on $M$, we can talk about ``transverse'' paths $\Gamma_t:[a, b]\rightarrow M$ and ``longitudinal'' paths  $\Gamma^s:[c, d]\rightarrow M$. Then
we may consider ``sideways'' composition and ``top-bottom'' composition for such ``surfaces''. On the other hand the essential idea of a double category ${\mathcal C}_{(2)}$ over a category ${\mathcal C}$ is, objects of ${\mathcal C}_{(2)}$ are morphisms ($1$-morphisms) of ${\mathcal C}$ and 
morphisms ($2$-morphisms) in ${\mathcal C}_{(2)}$ are equipped with two types of composition laws (namely, ``horizontal'' and ``vertical''). 
So, if we take into account that two elements of $\mpM$  might be composable as paths on $M$, then it indicates that  compositions of such `surfaces' (given as paths on a path space) correspond to compositions of $2$-morphisms in a higher category. In section~\ref{s:double} we show that if $M$ is complete then the geodesics on path space $\mpM$ induces a double category structure. Here, basically a $0$-morphism (object) consists of a triplet, a  point in $M,$ a tangent vector and an element of $\mathbb R$. Whereas a $1$-morphism is a triplet, given by a path on $M,$ a tangent vector along the path and an element of $\mathbb R$. Finally, a $2$-morphism is specified by a geodesic on the path space and an open interval in $\mathbb R$. Before we could make these ideas mathematically precise, we need to settle few technical difficulties. That has been done in section~\ref{s:double}. We use the technique of ``{\em back-track equivalence}'', which ensures that the equivalent set of back-track equivalent paths on $M$ can be treated 
as a morphism in a category, whose  object space is $M$. So, in section~\ref{s:double} we first discuss the notion of back-track equivalence
and construct a category ${\mathbb P}^{bt}$, whose object set is $M$ and morphisms are (back-track equivalent) paths on $M$. We show in Proposition~\ref{pr:equiva} that geodesics on $\mpM$ preserves the back-track equivalence identification on $\mpM$.  Theorem~\ref{th:double} proves the existence of a double category, whose base category is defined by ${\mathbb P}^{bt}$ (with some additional factors).

We end this paper with  a physical interpretation of the categories obtained in section~\ref{s:double}. In particular, we show that the morphisms of double  
category  in Theorem~\ref{th:double} can be interpreted as the world sheets generated by free strings (without any external force) on the Riemannian manifold $M$.


\section{Metric and covariant connection on path space}\label{s:vectcon}
Let $M$ be a Riemannian manifold. We define  path space $\mpM$ as the space of all smooth maps $\gamma:[0,1]\rightarrow M$ defined on an open 
interval $[0,1]$.  We denote the evaluation map as ${\rm ev}_t$, 
\begin{equation}
{\rm ev}_t:{\mathcal P}M \rightarrow M:\gamma \mapsto {\rm ev}_t(\gamma)=\gamma(t), \forall t\in [0,1]\label{eva}. 
\end{equation}
For a $\gamma \in \mpM$  we  get a tangent vector at $\gamma$ to be the differential of the map ${\rm ev}_t,$
$$X:=\{X:[0,1]\to TM,\text{  }  \text{smooth vector field along } \gamma \}.$$ 

Let $g$ be a metric on the manifold $M$. It defines an $L^{2}$ metric ${\widetilde g}$ on $\mpM$ ~\cite{wurzbacher, biswas-chat} given by
\begin{equation}
\left(\widetilde g (K_1,K_2)\right)_{\gamma}:=\int_{\gamma} g_{\gamma(t)}(K_1(t),K_2(t))dt,\label{tildeh},
\end{equation}
where $K_1,K_2$ are vector fields on $\mpM$. Much of the content of this section could be found in~\cite{freed}. We will briefly 
recall some results for our purpose. Let $\nabla$ be a covariant connection on $M$. Then define a connection on $\mpM$ by point-wise evaluation:
\begin{equation}\label{connection}
(\widetilde \nabla_{X} (Y))(t):=\nabla_{X(t)}Y(t),
\end{equation}
where $X, Y$ are vector fields on $\mpM.$ Thus we have the following proposition:
\begin{prop}\label{prop:metriccompati}
If  metric $g$ is compatible with the connection $\nabla$ on $M$, then so is metric $\widetilde g$  
 with $\widetilde{\nabla}$ on $\mpM$. 
\end{prop}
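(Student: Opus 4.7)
The plan is to unpack the definition of metric compatibility on $\mathcal{P}M$ into a pointwise statement that can be read off from the compatibility of $g$ with $\nabla$ on $M$. Concretely, compatibility on $M$ reads
$$U\bigl(g(V,W)\bigr) = g(\nabla_U V, W) + g(V, \nabla_U W)$$
for all vector fields $U,V,W$ on $M$, and I want to establish the analogous identity
$$X\bigl(\widetilde g(Y,Z)\bigr)(\gamma) = \widetilde g(\widetilde\nabla_X Y, Z)(\gamma) + \widetilde g(Y,\widetilde\nabla_X Z)(\gamma)$$
for vector fields $X,Y,Z$ on $\mathcal{P}M$ and any $\gamma\in\mathcal{P}M$.

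First, I would fix $\gamma$ and realize $X(\gamma)\in T_\gamma \mathcal{P}M$ by a smooth one-parameter variation $\gamma_s\in\mathcal{P}M$ with $\gamma_0=\gamma$ and $\partial_s\gamma_s|_{s=0}=X(\gamma)$, viewed concretely as the smooth map $(s,t)\mapsto\gamma_s(t)$ on $(-\epsilon,\epsilon)\times[0,1]$. Along this variation, $Y$ and $Z$ restrict to smooth $t$-families of vector fields $Y_s(t),Z_s(t)$ along $\gamma_s$, and the scalar function $\widetilde g(Y,Z)$ pulled back to the $s$-variable is
$$s\mapsto \int_0^1 g_{\gamma_s(t)}\bigl(Y_s(t),Z_s(t)\bigr)\,dt.$$

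Next, since the integrand is smooth in both $s$ and $t$ and $[0,1]$ is compact, I differentiate under the integral sign at $s=0$:
$$X\bigl(\widetilde g(Y,Z)\bigr)(\gamma) = \int_0^1 \left.\frac{\partial}{\partial s}\right|_{s=0} g_{\gamma_s(t)}\bigl(Y_s(t),Z_s(t)\bigr)\,dt.$$
For each fixed $t$, the inside is exactly $X(t)\bigl(g(Y,Z)\bigr)$ evaluated at $\gamma(t)$, with $X(t)=\partial_s\gamma_s(t)|_{s=0}$. Applying the hypothesized compatibility of $g$ with $\nabla$ pointwise gives
$$\left.\frac{\partial}{\partial s}\right|_{s=0} g_{\gamma_s(t)}\bigl(Y_s(t),Z_s(t)\bigr) = g_{\gamma(t)}\bigl(\nabla_{X(t)}Y(t),Z(t)\bigr) + g_{\gamma(t)}\bigl(Y(t),\nabla_{X(t)}Z(t)\bigr).$$

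Finally, plugging this back into the integral and using the definition~(\ref{connection}) of $\widetilde\nabla$ by pointwise evaluation yields
$$X\bigl(\widetilde g(Y,Z)\bigr)(\gamma) = \int_0^1 g_{\gamma(t)}\bigl((\widetilde\nabla_X Y)(t),Z(t)\bigr)\,dt + \int_0^1 g_{\gamma(t)}\bigl(Y(t),(\widetilde\nabla_X Z)(t)\bigr)\,dt,$$
which is exactly $\widetilde g(\widetilde\nabla_X Y,Z)(\gamma) + \widetilde g(Y,\widetilde\nabla_X Z)(\gamma)$. No step is a serious obstacle: the only non-formal ingredient is the legitimacy of differentiation under the integral, which is automatic since all fields are smooth and the parameter domain in $t$ is compact. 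The essential point is that both $\widetilde g$ and $\widetilde\nabla$ are built from $g$ and $\nabla$ by the same pointwise-then-integrate procedure, so the compatibility identity transfers term by term under the integral.
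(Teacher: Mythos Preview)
Your argument is correct: differentiating under the integral and applying the compatibility identity $X(t)\bigl(g(Y,Z)\bigr)=g(\nabla_{X(t)}Y(t),Z(t))+g(Y(t),\nabla_{X(t)}Z(t))$ pointwise in $t$ gives exactly the compatibility of $\widetilde g$ with $\widetilde\nabla$. The only analytic step is exchanging $\partial_s$ with $\int_0^1 dt$, and as you note this is harmless because everything is smooth on the compact interval $[0,1]$.

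The paper proceeds a little differently. Rather than checking the single identity $X\,\widetilde g(Y,Z)=\widetilde g(\widetilde\nabla_X Y,Z)+\widetilde g(Y,\widetilde\nabla_X Z)$, it verifies that the pointwise connection $\widetilde\nabla$ satisfies the Koszul ``six-term'' formula \eqref{covpath} with respect to $\widetilde g$. That identity characterizes the Levi--Civita connection of $\widetilde g$, so in one stroke it yields both metric compatibility and torsion-freeness of $\widetilde\nabla$. The cost is that the Koszul identity on $M$ holds only when $\nabla$ itself is torsion-free, so the paper is implicitly using a stronger hypothesis than is stated in the proposition. Your direct verification, by contrast, uses only the assumed compatibility $\nabla g=0$ and therefore matches the proposition's hypothesis precisely; it is the more economical proof of the statement as written, while the paper's route tacitly establishes the stronger fact that $\widetilde\nabla$ is Levi--Civita for $\widetilde g$.
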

\begin{proof}
 The proof follows by verifying the famous ``six terms'' formula (Theorem 2.2 and 
Proposition 2.3, Chapter-IV \cite{kob1})
\begin{eqnarray}
2{\widetilde g}({\widetilde \nabla}_X Y, Z):=&&\iota_{X}{\mbox d}{\widetilde g}(Y,Z)+\iota_{Y}{\mbox d}{\widetilde g}(X,Z)-\iota_{Z}{\mbox d}{\widetilde g}(X,Y)\nonumber\\
&&+{\widetilde g}([X,Y],Z)+{\widetilde g}([Z,X],Y)+{\widetilde g}(X,[Z,Y]),\label{covpath}
\end{eqnarray}
where $X,Y,Z$ are vector fields on $\mpM$  and $\iota$ is the contraction. 
\end{proof}
We define a path on the path space $\mpM$ by a continuous map
\begin{equation}
\Gamma: [a,b]\rightarrow \mpM;  \text{  }s\mapsto \Gamma(s)\in \mpM.\label{path}
\end{equation}
Thus for each $s\in [a,b]$ we have a path given by
$$\Gamma(s)(t):=\Gamma_s(t):=\Gamma(s,t).$$ 
We denote `longitudinal' and `transverse'  paths respectively as
\begin{eqnarray}
&&\Gamma^s:[0,1]\rightarrow M, \Gamma^{s}(t)=\Gamma(s,t)\\\label{long}
&&\Gamma_t:[a,b]\rightarrow M, \Gamma_{t}(s)=\Gamma(s,t).\label{trans}
\end{eqnarray}
A tangent vector field along this path $\Gamma$ is given by
\begin{equation}
\Gamma':[a, b]\to T(\mpM);\text{ }s\mapsto \frac{\partial}{\partial s} \Gamma(s,t)\label{tgt vectorfield}
\end{equation}
As a consequence of the point-wise definition of our covariant derivative in \eqref{connection}, it is obvious that:
\begin{prop}\label{prop:parapath}
Let $[a, b]\subset \mathbb R$ be an interval containing $0$ and $\Gamma:[a,b]\rightarrow \mpM$ a path on path space $\mpM$. 
If a vector $V \in T_{\Gamma(0)}\mpM $ is given by 
$V(t)\in T_{\Gamma(0)(t)}M \equiv T_{\Gamma^0(t)}M $. Then parallel transport 
of $V$ along $\Gamma:[a,b]\rightarrow \mpM$ by the Riemannian connection on path space 
defined above is given by the solution of $$\nabla_{{\Gamma'}_t(s)}{X_t(s)}=0, \qquad \textit{{for each}} \hskip 0.2cm  t\in [0,1],$$
with the initial condition $X_t(0)=V(t)$, for each $t\in [0,1].$
\end{prop}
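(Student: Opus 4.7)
My plan is to prove Proposition~\ref{prop:parapath} by simply unwinding the definitions, since the point-wise construction of $\widetilde{\nabla}$ in \eqref{connection} reduces parallel transport on $\mpM$ to a family of parallel transport equations on $M$ parametrised by $t\in[0,1]$.

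First I would recall that a parallel vector field $X$ along the path $\Gamma:[a,b]\to\mpM$ with initial value $X(0)=V$ is, by definition, the solution of
\[
\widetilde{\nabla}_{\Gamma'(s)}X(s)=0,\qquad X(0)=V,
\]
where $\Gamma'(s)$ is the tangent field along $\Gamma$ given by \eqref{tgt vectorfield}. The existence and uniqueness of such an $X$ is granted by the general theory of connections (and will be used implicitly). My task is only to identify this $X$ in concrete terms.

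Next I would evaluate the equation at an arbitrary $t\in[0,1]$ using \eqref{connection}, which gives
\[
0=\bigl(\widetilde{\nabla}_{\Gamma'(s)}X(s)\bigr)(t)=\nabla_{\Gamma'(s)(t)}\,X(s)(t).
\]
Writing $X_t(s):=X(s)(t)$ and observing from \eqref{tgt vectorfield} that $\Gamma'(s)(t)=\frac{\partial}{\partial s}\Gamma(s,t)=\Gamma'_t(s)$, the displayed equation becomes exactly
\[
\nabla_{\Gamma'_t(s)}X_t(s)=0
\]
for each fixed $t$, with the initial condition $X_t(0)=V(t)$ read off from $X(0)=V$.

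Conversely, I would note that if one solves, for each $t\in[0,1]$, the standard parallel transport ODE on $M$ along the transverse curve $s\mapsto \Gamma_t(s)$ starting from $V(t)$, then smooth dependence of solutions of ODEs on parameters guarantees that the resulting family $X_t(s)$ assembles into a smooth vector field along $\Gamma$, and by the calculation above it satisfies $\widetilde{\nabla}_{\Gamma'(s)}X(s)=0$. Uniqueness of parallel transport then identifies this family with the parallel transport of $V$ along $\Gamma$. There is no genuine obstacle here, since the construction in \eqref{connection} is designed precisely to make the parallel transport equation on $\mpM$ hold pointwise in $t$; the only minor technical point worth mentioning is the smooth $t$-dependence of the family $\{X_t\}$, which follows from smooth parameter dependence of the transverse velocity field $\Gamma'_t$.
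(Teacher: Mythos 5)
Your proof is correct and follows exactly the route the paper intends: the paper offers no separate argument, simply asserting the proposition as an immediate consequence of the point-wise definition of $\widetilde{\nabla}$ in \eqref{connection}, which is precisely the unwinding of definitions you carry out. Your added remarks on smooth parameter dependence and uniqueness are a welcome elaboration of the same idea, not a different approach.
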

Also $\Gamma$ is a geodesic on $\mpM,$ if and only if each transverse path $\Gamma_t,$
as defined in \eqref{trans}, is a geodesic on $M$ for each $t \in[0, 1]$. Thus we have the following proposition [Proposition 3.1 \cite{mishra3}]
\begin{prop}\label{prop:initialcongeo}
For any given $\gamma\in \mpM$ and any vector $V\in T_{\gamma}\mpM$, there is a
unique path space geodesic $\Gamma:[a,b]\rightarrow \mpM$, such that
$\Gamma(0)=\gamma$ and $\Gamma'(0)=V$, where $[a, b]$ is an interval containing $0$.
\end{prop}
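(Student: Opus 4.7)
The plan is to reduce the statement to a pointwise (in $t$) application of the classical existence and uniqueness theorem for geodesics on $M$, and then upgrade this pointwise construction to a smooth family. The key input has already been stated in the text just before the proposition: a path $\Gamma:[a,b]\to\mpM$ is a geodesic on $\mpM$ if and only if, for every $t\in[0,1]$, the transverse path $\Gamma_t:s\mapsto \Gamma(s,t)$ is a geodesic on $M$. This converts the problem into finding, for each $t$, the unique geodesic $\Gamma_t$ on $M$ with initial data $\Gamma_t(0)=\gamma(t)$ and $\Gamma_t'(0)=V(t)$, and then checking that these glue into a smooth map.

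First I would fix $\gamma\in\mpM$ and $V\in T_\gamma\mpM$, interpreted as a smooth vector field along $\gamma$. For every individual $t\in[0,1]$, standard Riemannian geometry supplies a unique maximal geodesic $c_t(s)$ on $M$ satisfying $c_t(0)=\gamma(t)$ and $c_t'(0)=V(t)$. Uniqueness is therefore automatic: if $\Gamma$ is any candidate solution, each $\Gamma_t$ must coincide with $c_t$ on their common domain, so at most one $\Gamma$ can exist.

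For existence, I need a single interval $[a,b]\ni 0$ on which all $c_t$ are simultaneously defined, together with joint smoothness in $(s,t)$. Both come from the standard smooth dependence theorem for ODEs applied to the geodesic flow on $TM$. The map $t\mapsto V(t)$ is a smooth map from $[0,1]$ into $TM$, and its image is compact. The domain of the geodesic flow is an open neighbourhood of the zero section in $\mathbb{R}\times TM$, so it contains a set of the form $(-\varepsilon,\varepsilon)\times K$ for some open $K\supset V([0,1])$. Choosing $[a,b]\subset(-\varepsilon,\varepsilon)$ with $0\in[a,b]$ gives a common domain, and smooth dependence on initial conditions then shows that the map $(s,t)\mapsto\Gamma(s,t):=c_t(s)$ is $C^\infty$ on $[a,b]\times[0,1]$. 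In particular, for each fixed $s$, the curve $t\mapsto\Gamma(s,t)$ is a smooth path on $M$, hence an element of $\mpM$, so $\Gamma:[a,b]\to\mpM$ is well defined with $\Gamma(0)=\gamma$ and $\partial_s\Gamma(0,t)=V(t)$, i.e.\ $\Gamma'(0)=V$. By construction each transverse path is a geodesic on $M$, so by the characterisation recalled above $\Gamma$ is a geodesic on $\mpM$.

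The only non-formal step is the extraction of a uniform existence interval, and that is handled cleanly by the compactness of $[0,1]$ combined with the openness of the domain of the geodesic flow; once that is secured, joint smoothness and uniqueness are immediate consequences of the corresponding properties of the ODE defining geodesics on $M$.
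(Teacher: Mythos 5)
Your proposal is correct and follows essentially the same route as the paper, which simply invokes the characterisation that $\Gamma$ is a path-space geodesic iff each transverse path $\Gamma_t$ is a geodesic on $M$ and then cites Proposition 3.1 of the reference by Kumar. Your write-up additionally supplies the uniform existence interval (via compactness of $[0,1]$ and openness of the domain of the geodesic flow) and joint smoothness in $(s,t)$, details the paper leaves implicit.
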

Recall that a linear connection on $M$ is complete if for any $p\in M$ and $X\in T_p M$, the geodesic 
$\gamma:[a,b]\rightarrow M$ with the initial conditions $\gamma(0)=p, {\dot{\gamma}}(0)=X$ can be extended for 
all values of $t$, i.e. $\gamma$ can be defined as $\gamma:(-\infty,\infty)\rightarrow M$. 
From Proposition~\ref{prop:initialcongeo} it follows that:
\begin{prop}\label{prop:completeness}
If $M$ is complete with respect to a connection $\nabla$, then $\mpM$ is
complete with respect to the induced connection $\widetilde{\nabla}.$
\end{prop}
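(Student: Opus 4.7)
The plan is to exploit the pointwise characterization of geodesics on $\mpM$ noted just before the proposition: a path $\Gamma$ on $\mpM$ is a geodesic if and only if each transverse path $\Gamma_t: s \mapsto \Gamma(s,t)$ is a geodesic on $M$. Completeness on $\mpM$ therefore reduces to extending a one-parameter family of geodesics on $M$, one for each $t \in [0,1]$, and checking that the family assembles into a smooth path in $\mpM$.

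Fix $\gamma_0 \in \mpM$ and $V \in T_{\gamma_0}\mpM$, which we identify with a smooth vector field along $\gamma_0$. For each $t \in [0,1]$, let $\sigma_t : I_t \to M$ be the maximal geodesic on $M$ with $\sigma_t(0) = \gamma_0(t)$ and $\dot\sigma_t(0) = V(t)$. By completeness of $\nabla$ on $M$, each $I_t = (-\infty, \infty)$, so we may set
\begin{equation}
\Gamma(s,t) := \sigma_t(s) = \exp_{\gamma_0(t)}\bigl(s\,V(t)\bigr), \qquad (s,t) \in \mathbb{R} \times [0,1].\nonumber
\end{equation}
Define $\widetilde{\Gamma} : \mathbb{R} \to \mpM$ by $\widetilde{\Gamma}(s)(t) = \Gamma(s,t)$, provided this really lands in $\mpM$.

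The core verification is smoothness. Since $\gamma_0$ and $V$ are smooth in $t$, and the exponential map $\exp : TM \to M$ is smooth on its domain (by smooth dependence of solutions of the geodesic ODE on initial data and parameters), the map $(s,t) \mapsto \exp_{\gamma_0(t)}(s V(t))$ is jointly smooth on $\mathbb{R} \times [0,1]$. In particular, for each fixed $s$ the slice $t \mapsto \widetilde{\Gamma}(s)(t)$ is smooth, so $\widetilde{\Gamma}(s) \in \mpM$; and $s \mapsto \widetilde{\Gamma}(s)$ is smooth as a curve into $\mpM$. By construction $\widetilde{\Gamma}(0) = \gamma_0$ and $\widetilde{\Gamma}'(0)(t) = \partial_s|_{s=0}\,\sigma_t(s) = V(t)$, so $\widetilde{\Gamma}'(0) = V$.

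Since every transverse path $\widetilde{\Gamma}_t = \sigma_t$ is a geodesic on $M$, the pointwise characterization makes $\widetilde{\Gamma}$ a geodesic on $\mpM$ for $\widetilde{\nabla}$; by the uniqueness half of Proposition~\ref{prop:initialcongeo}, it is the maximal geodesic with the prescribed initial data, and it is defined on all of $\mathbb{R}$. Since $\gamma_0$ and $V$ were arbitrary, $\widetilde{\nabla}$ is complete. The only step requiring any care is the joint smoothness of $(s,t) \mapsto \Gamma(s,t)$, which is what guarantees that the family of pointwise geodesics on $M$ fits together as a genuine smooth curve in $\mpM$ rather than just a set-theoretic one; this follows from smooth dependence of geodesics on initial data together with the smoothness of $\gamma_0$ and $V$ in $t$.
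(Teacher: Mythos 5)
Your argument is correct and follows the same route as the paper: the pointwise characterization of path-space geodesics (each transverse path $\Gamma_t$ is a geodesic on $M$) reduces completeness of $\widetilde{\nabla}$ to extending each $\Gamma_t$ to all of $\mathbb{R}$, which completeness of $\nabla$ supplies. The paper simply cites Proposition~\ref{prop:initialcongeo} without further detail, so your explicit attention to the joint smoothness of $(s,t)\mapsto \exp_{\gamma_0(t)}(sV(t))$ is a welcome elaboration rather than a deviation.
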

Now suppose for each $V\in T_{\gamma}\mpM$, ${}^V\Gamma$ be the geodesic with the initial condition ${}^V\Gamma(0)=\gamma, {{}^V\Gamma}^{'}(0)=V$, then 
exponential map on the path space $\rm Exp$ is 
\begin{eqnarray}
&&{\rm Exp}:T_{\gamma}\mpM \rightarrow \mpM\nonumber\\
&&{\rm Exp}:V \mapsto {}^V\Gamma(1)\label{expo}.
\end{eqnarray}
As a consequence of Proposition~\ref{prop:initialcongeo},  $({\rm Exp}(V))(t)$ is given by
\begin{equation}
{\rm exp}:V(t) \mapsto {{}^{V(t)}}\Gamma_t(1), \qquad {\textit{for each}}\hskip 0.2cm t\in [0,1]\label{expoM},
\end{equation}
where ${}^{V(t)}\Gamma_t$ is the geodesic on $M$ with  initial conditions ${}^{V(t)}\Gamma_t(0)=\gamma(t), {{}^{V(t)}\Gamma}_{t}^{\prime}(0)=V(t)$
and ${\rm exp}$ is the exponential map on $M$. It is obvious from \eqref{expoM} that, if ${\rm exp}$ is defined on the entire $T_p M$ for each $p\in M$, then ${\rm Exp}$ is also defined on the entire $T_{\gamma}\mpM$ for each $\gamma\in \mpM$. This exponential map on $\mpM$ provides a chart on  $\mpM$~\cite{mishra, mishra2}.


\section{Distance function on $\mpM$}\label{s:metric}
Let $\gamma:[0,1]\rightarrow M$ be a path on $M$, then energy functional evaluated at $\gamma$ is defined as
\begin{equation}
{\mathcal E}_{\gamma}:=\frac{1}{2}\int_{0}^{1}{{(g (\gamma'(s),\gamma'(s)))_{\gamma(s)}}}ds.\label{energyfuncor}
\end{equation}
Suppose a path on path space $\Gamma:[a,b]\rightarrow \mpM$ is given, then we define energy functional evaluated at $\Gamma$ on path space to be,
\begin{equation}
E_{\Gamma}:=\frac{1}{2}\int_{a}^{b}{{({\widetilde g} (\Gamma'(s),\Gamma'(s)))_{\Gamma(s)}}}ds.\label{energyfuncpath}
\end{equation}
Therefore we can write \eqref{energyfuncpath} as
\begin{equation}
E_{\Gamma}=\frac{1}{2}\int_{a}^{b}\left(\int_{0}^{1} g\left(\Gamma'(s,t),\Gamma'(s,t)\right)dt\right)ds.\label{arclenthpathwithg}
\end{equation}
As the integrations with respect to
$s$ and $t$ are independent in \eqref{arclenthpathwithg}, we have
\begin{eqnarray}
&&E_{\Gamma}=\frac{1}{2}\int_{0}^{1}\left(\int_{a}^{b} g\left(\Gamma'(s,t),\Gamma'(s,t)\right)ds\right)dt,\nonumber\\
&&\Rightarrow E_{\Gamma}=\int_{0}^{1}{\mathcal E}_{\Gamma_{t}}dt,\label{patheore}
\end{eqnarray}
where $\Gamma_{t}:[a,b]\rightarrow M$ for each $t\in[0,1],$ as defined in \eqref{trans}.
The energy functional on path spaces has been described in ~\cite{Docarmo, grove}.
 Let us define
\begin{eqnarray}
{\widetilde {\rm d}}(\gamma_0,\gamma_1)&&:={\rm {infimum {\hskip 0.1cm}  of}}\hskip 0.2cm \sqrt{2|b-a|}\sqrt{E_{\Gamma}} {\rm for {\hskip 0.1cm}  all} {\hskip 0.1cm} \{\Gamma:[a,b]\rightarrow \mpM|\Gamma(a)=\gamma_{0},\Gamma(b)=\gamma_1\}\nonumber\\
&=&{\rm {infimum {\hskip 0.1cm}  of}}\hskip 0.2cm \sqrt{2|b-a|}\sqrt{\int_{0}^{1}{\mathcal E}_{\Gamma_{t}}dt}\nonumber\\
&&{\rm {for {\hskip 0.1cm}  all}} {\hskip 0.1cm} \{\Gamma:[a,b]\rightarrow \mpM|\Gamma(a)=\gamma_{0},\Gamma(b)=\gamma_1\}.\label{d2}
\end{eqnarray}
It can be easily verified that ${\widetilde {\rm d}}$ is  a well defined distance function.
 Recall the exponential map ${\rm Exp}$ on $\mpM$ is given by \eqref{expoM}
$$X(t)\mapsto {\rm exp}_{\gamma(t)}X(t),$$
where $X\in T_{\gamma}\mpM$ and ${\rm exp}$ is the exponential map on $M$. For our Rimannian connection
${\widetilde \nabla}$ this exponential map explicitly reads
\begin{eqnarray}
&&{\rm Exp}:V \mapsto {{}^{V}}\Gamma(1), \label{expopM},\\
&&{\rm exp}:V(t) \mapsto {{}^{V(t)}}\Gamma_t(1), \qquad {\textit{for each}}\hskip 0.2cm t\in [0,1]\label{expoM2}
\end{eqnarray}
where ${}^{V(t)}\Gamma_t$ is the geodesic on $M$ with the initial conditions
${}^{V(t)}\Gamma_t(0)=\gamma(t), {{}^{V(t)}\Gamma}_{t}^{\prime}(0)=V(t)$
and, by Proposition~\ref{prop:initialcongeo} ${}^{V}\Gamma$, is the corresponding
geodesic on $\mpM$. Hence it follows that,
\begin{prop}\label{prop:existencenormalne}
If ${\mathcal U}_{\gamma_0}\subset \mpM$ is the normal neighbourhood on $\mpM$ around $\gamma_0\in \mpM$, then
\begin{equation}
{\mathcal U}_{\gamma_0}=\{\gamma \in \mpM|\gamma(t)\in U_{\gamma_0(t)}, \textit{for each}\hskip0.2cm t\in [0,1]\},\label{nornrionpath}
\end{equation}
where $U_{\gamma_{0}(t)}\subset M$ is a normal neighbourhood around $\gamma_{0}(t).$
\end{prop}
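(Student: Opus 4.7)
The plan is to reduce the statement to the classical picture on $M$ using the pointwise formula~\eqref{expoM2} for the path-space exponential ${\rm Exp}$. Recall that a normal neighbourhood of $\gamma_0 \in \mpM$ is, by definition, the image under ${\rm Exp}_{\gamma_0}$ of a star-shaped open neighbourhood $\widetilde{\mathcal W} \subset T_{\gamma_0}\mpM$ of $0$ on which ${\rm Exp}_{\gamma_0}$ is a diffeomorphism onto its image. Our job is therefore to identify such a $\widetilde{\mathcal W}$ and compute its image explicitly.

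First, for each $t \in [0,1]$ I would fix a star-shaped open set $\widetilde U_{\gamma_0(t)} \subset T_{\gamma_0(t)}M$ containing $0$ such that $\exp_{\gamma_0(t)}$ restricts to a diffeomorphism $\widetilde U_{\gamma_0(t)} \to U_{\gamma_0(t)}$ onto a normal neighbourhood of $\gamma_0(t)$ in $M$. Using the smoothness of $\exp$ on $TM$ together with the compactness of $\gamma_0([0,1])$, these $\widetilde U_{\gamma_0(t)}$ can be chosen to depend smoothly on $t$, so that
\[
\widetilde{\mathcal W} := \{ V \in T_{\gamma_0}\mpM : V(t) \in \widetilde U_{\gamma_0(t)} \text{ for all } t \in [0,1]\}
\]
is a star-shaped open neighbourhood of $0$ in $T_{\gamma_0}\mpM$.

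Next, I would prove ${\rm Exp}_{\gamma_0}(\widetilde{\mathcal W}) = \{\gamma \in \mpM : \gamma(t) \in U_{\gamma_0(t)}\ \text{for all}\ t\}$ by a pointwise argument. The inclusion $\subseteq$ is immediate from~\eqref{expoM2}: if $V \in \widetilde{\mathcal W}$ then $\big({\rm Exp}_{\gamma_0}(V)\big)(t) = \exp_{\gamma_0(t)} V(t) \in U_{\gamma_0(t)}$. For the reverse inclusion, given $\gamma$ with $\gamma(t) \in U_{\gamma_0(t)}$, set $V(t) := \exp_{\gamma_0(t)}^{-1}(\gamma(t)) \in \widetilde U_{\gamma_0(t)}$; the smoothness of $V$ as a vector field along $\gamma_0$ follows from the smoothness of the inverse of the exponential on $TM$ combined with the smoothness of $\gamma$, so $V \in \widetilde{\mathcal W}$ and ${\rm Exp}_{\gamma_0}(V) = \gamma$.

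Finally, to conclude that this set is precisely $\mathcal U_{\gamma_0}$, I would check that ${\rm Exp}_{\gamma_0}$ is a diffeomorphism on $\widetilde{\mathcal W}$: injectivity follows from the pointwise injectivity of each $\exp_{\gamma_0(t)}$, and the explicit pointwise inverse $\gamma \mapsto \big(t \mapsto \exp_{\gamma_0(t)}^{-1}\gamma(t)\big)$ provides a smooth inverse map. The main obstacle lies in handling the $t$-dependence uniformly, namely ensuring that the family $\{\widetilde U_{\gamma_0(t)}\}_{t \in [0,1]}$ can be chosen so that $\widetilde{\mathcal W}$ is actually open in the topology on $T_{\gamma_0}\mpM$; this is where the compactness of $[0,1]$ and the smoothness of $\exp$ on $TM$ are crucial, as they allow a uniform lower bound on the radii of the normal neighbourhoods $\widetilde U_{\gamma_0(t)}$ along $\gamma_0$.
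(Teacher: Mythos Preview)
Your proposal is correct and follows the same underlying idea as the paper, namely that the pointwise formula~\eqref{expoM2} for ${\rm Exp}$ reduces the description of the normal neighbourhood on $\mpM$ to the classical one on $M$. In fact the paper gives no detailed proof at all: it simply writes ``Hence it follows that'' immediately before the proposition, treating it as an evident consequence of~\eqref{expopM}--\eqref{expoM2}. Your argument therefore supplies exactly the details the paper omits --- the explicit construction of the star-shaped set $\widetilde{\mathcal W}$, the two inclusions, the smooth inverse, and the uniformity in $t$ via compactness of $[0,1]$ --- and is a genuine improvement in rigor over what appears in the text.
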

Rest of this section would be devoted to prove the following theorem
\begin{theorem}\label{theo:normalneighbour}
Any $\gamma_1, \gamma_2\in {\mathcal U}_{{\gamma}_0}$ can be joined  by a unique path space geodesic and
length of that geodesic is ${\widetilde {\rm d}}(\gamma_1,\gamma_2)$.  ${\mathcal U}_{{\gamma}_0}$ is convex with respect to
the distance function ${\widetilde {\rm d}}$.
\end{theorem}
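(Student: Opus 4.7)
The plan is to reduce all three assertions (existence, uniqueness of the joining geodesic, and metric convexity of $\mathcal{U}_{\gamma_0}$) to the corresponding pointwise statements on $M$ via the characterization just before Proposition~\ref{prop:initialcongeo}: a path $\Gamma:[a,b]\to\mpM$ is a geodesic if and only if each transverse curve $\Gamma_t$ is a geodesic on $M$. The description \eqref{nornrionpath} of $\mathcal{U}_{\gamma_0}$ lets me work slice by slice in $t$.

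For existence, I would first take each $U_{\gamma_0(t)}\subset M$ to be a \emph{convex} normal neighbourhood (possible by shrinking if necessary). Then for each $t\in[0,1]$ the points $\gamma_1(t),\gamma_2(t)$ lie in $U_{\gamma_0(t)}$, so there is a unique minimizing geodesic $\Gamma_t:[a,b]\to M$ from $\gamma_1(t)$ to $\gamma_2(t)$ staying inside $U_{\gamma_0(t)}$. Define $\Gamma(s,t):=\Gamma_t(s)$. Writing $\Gamma_t(s)=\exp_{\gamma_1(t)}\!\bigl(s\cdot\exp_{\gamma_1(t)}^{-1}\gamma_2(t)\bigr)$ (after affinely rescaling $[a,b]$ to $[0,1]$) exhibits $(s,t)\mapsto\Gamma(s,t)$ as a smooth map, since $\exp$ and its inverse are smooth on the convex neighbourhood. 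Hence $\Gamma:[a,b]\to\mpM$ is a smooth path on path space joining $\gamma_1$ and $\gamma_2$, and by the transverse characterization it is a geodesic. Since $\Gamma_t(s)\in U_{\gamma_0(t)}$ for every $(s,t)$, the entire geodesic lies in $\mathcal{U}_{\gamma_0}$, which already establishes the geodesic convexity part of the statement. Uniqueness is the mirror image: any path space geodesic $\widetilde\Gamma$ in $\mathcal{U}_{\gamma_0}$ from $\gamma_1$ to $\gamma_2$ must have each transverse slice $\widetilde\Gamma_t$ a geodesic in $U_{\gamma_0(t)}$ joining $\gamma_1(t)$ to $\gamma_2(t)$, whence $\widetilde\Gamma_t=\Gamma_t$ by uniqueness on $M$ and therefore $\widetilde\Gamma=\Gamma$.

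For the length identification, I would use the Fubini-type identity $E_\Gamma=\int_0^1\mathcal{E}_{\Gamma_t}\,dt$ derived in \eqref{patheore} together with the fact that in a convex normal neighbourhood on $M$ the radial geodesic is genuinely length (and energy) minimizing among all piecewise smooth curves joining its endpoints (Gauss lemma). For an arbitrary competitor $\Gamma^{*}:[a,b]\to\mpM$ with $\Gamma^{*}(a)=\gamma_1$, $\Gamma^{*}(b)=\gamma_2$, each transverse curve $\Gamma^{*}_t$ goes from $\gamma_1(t)$ to $\gamma_2(t)$ in $M$, so $\mathcal{E}_{\Gamma^{*}_t}\ge\mathcal{E}_{\Gamma_t}$ for every $t$, and integrating gives $E_{\Gamma^{*}}\ge E_\Gamma$. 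After reparametrizing $\Gamma$ to constant speed on $[a,b]$ (which preserves the infimum in \eqref{d2} by the Cauchy--Schwarz relation $2|b-a|E\ge L^2$ with equality for constant speed), the bound $\sqrt{2|b-a|E_\Gamma}\le\sqrt{2|b-a|E_{\Gamma^{*}}}$ shows that $\Gamma$ achieves the infimum defining $\widetilde{\rm d}(\gamma_1,\gamma_2)$, and hence its length equals $\widetilde{\rm d}(\gamma_1,\gamma_2)$. Metric convexity of $\mathcal{U}_{\gamma_0}$ then follows because the unique minimizing path is the one constructed above and it lies in $\mathcal{U}_{\gamma_0}$.

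The main obstacle I anticipate is the comparison step with competitors $\Gamma^{*}$ that leave $\mathcal{U}_{\gamma_0}$: a priori such a competitor could have smaller energy. The resolution is that the pointwise inequality $\mathcal{E}_{\Gamma^{*}_t}\ge\mathcal{E}_{\Gamma_t}$ is a \emph{global} statement on $M$ (not restricted to the slice-wise normal neighbourhood), coming from the Gauss-lemma minimality of radial geodesics inside the normal ball. Once this ingredient is in hand, the rest of the argument is bookkeeping: smoothness of the constructed $\Gamma$ in $(s,t)$, the pointwise-to-global integration, and the reparametrization to constant speed.
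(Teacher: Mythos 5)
Your proposal is correct and follows essentially the same route as the paper: reduce everything to the transverse slices $\Gamma_t$, use uniqueness and minimality of geodesics in the normal neighbourhoods $U_{\gamma_0(t)}$, and combine the Fubini identity $E_\Gamma=\int_0^1\mathcal{E}_{\Gamma_t}\,dt$ with Cauchy--Schwarz to identify the length with $\widetilde{\rm d}(\gamma_1,\gamma_2)$. Your explicit comparison against arbitrary competitors $\Gamma^{*}$ (via the global slice-wise inequality $\mathcal{E}_{\Gamma^{*}_t}\ge\mathcal{E}_{\Gamma_t}$) spells out a step the paper leaves implicit, but it is the same argument.
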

We proceed with the following proposition:
\begin{prop}\label{prop:joininggeoinnormal}
Every  $\gamma_1, \gamma_2\in {\mathcal U}_{\gamma_0}$ can be joined  by a unique path space geodesic
lying in ${\mathcal U}_{\gamma_0}$.
\end{prop}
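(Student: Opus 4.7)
The plan is to build the joining geodesic on $\mpM$ by assembling the unique joining geodesics in the normal neighbourhoods $U_{\gamma_0(t)}\subset M$ pointwise in $t$, and then invoke the characterisation stated after Proposition~\ref{prop:initialcongeo} that a path $\Gamma:[0,1]\to \mpM$ is a geodesic precisely when each transverse path $\Gamma_t$ is a geodesic in $M$.

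First, by Proposition~\ref{prop:existencenormalne}, for each $t\in[0,1]$ the points $\gamma_1(t)$ and $\gamma_2(t)$ both lie in the normal neighbourhood $U_{\gamma_0(t)}$ of $\gamma_0(t)$. Taking (as is standard) these neighbourhoods to be totally normal/convex, for each $t$ there is a unique geodesic
$$\sigma_t:[0,1]\to U_{\gamma_0(t)}\subset M,\qquad \sigma_t(0)=\gamma_1(t),\quad \sigma_t(1)=\gamma_2(t),$$
given explicitly by $\sigma_t(s)={\rm exp}_{\gamma_1(t)}\!\bigl(s\,v(t)\bigr)$ with $v(t):={\rm exp}_{\gamma_1(t)}^{-1}(\gamma_2(t))$. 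I would then set
$$\Gamma:[0,1]\to \mpM,\qquad \Gamma(s)(t):=\sigma_t(s),$$
and verify smoothness of $\Gamma$ as a map in $(s,t)$. This uses that $\gamma_1,\gamma_2$ are smooth and that the exponential map and its local inverse depend smoothly on the base point over the compact curve $\gamma_0([0,1])$, so $v(t)$ is smooth in $t$ and hence $(s,t)\mapsto {\rm exp}_{\gamma_1(t)}(s\,v(t))$ is jointly smooth. By construction each transverse path $\Gamma_t=\sigma_t$ is a geodesic of $M$ lying in $U_{\gamma_0(t)}$, so the criterion recalled above gives that $\Gamma$ is a path space geodesic, and $\Gamma(s)\in \mathcal{U}_{\gamma_0}$ for every $s\in[0,1]$.

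For uniqueness, if $\widetilde{\Gamma}:[0,1]\to\mathcal{U}_{\gamma_0}$ were another $\mpM$-geodesic from $\gamma_1$ to $\gamma_2$, then each $\widetilde{\Gamma}_t$ is a geodesic in $M$, lies in $U_{\gamma_0(t)}$, and joins $\gamma_1(t)$ to $\gamma_2(t)$; the uniqueness of the joining geodesic inside a convex normal neighbourhood forces $\widetilde{\Gamma}_t=\sigma_t$ for all $t$, so $\widetilde{\Gamma}=\Gamma$. The main obstacle I anticipate is the joint smoothness of $\Gamma(s,t)$, i.e.\ verifying that $v(t)$ varies smoothly as $t$ ranges over $[0,1]$ even though the basepoint of the inverse exponential also varies; this is handled by compactness of $\gamma_0([0,1])$ together with the local diffeomorphism property of ${\rm exp}$ on each $U_{\gamma_0(t)}$. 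A secondary point is that the neighbourhoods $U_{\gamma_0(t)}$ can indeed be chosen convex uniformly in $t$, which again follows from compactness and the standard existence of convex normal neighbourhoods.
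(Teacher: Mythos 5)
Your proposal follows essentially the same route as the paper: for each $t$ you take the unique geodesic in $U_{\gamma_0(t)}$ joining $\gamma_1(t)$ to $\gamma_2(t)$, assemble these into $\Gamma(s,t)$, and invoke the characterisation that $\Gamma$ is a path-space geodesic precisely when every transverse path $\Gamma_t$ is a geodesic on $M$. Your added attention to the joint smoothness of $(s,t)\mapsto{\rm exp}_{\gamma_1(t)}(s\,v(t))$ and to the need for the $U_{\gamma_0(t)}$ to be convex (not merely normal) addresses points the paper leaves implicit, but does not change the argument.
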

\begin{proof} We recall that for a $C^{\infty}$ manifold $M$ with an affine connection, there always
exists an open neighbourhood $N_{p}$ of the zero vector $0\in T_pM$, such that
\begin{enumerate}
\item The exponential map ${\rm exp}:N_{p}\rightarrow U_p$ is diffeomorphic.
\item if $X\in N_p$, then $sX\in N_p$, for some interval $[a,b]$ and $s\in[a,b],$
\end{enumerate}
where $U_p\subset M$ is an open normal neighbourhood around $p$. We set the exponential map ${\rm exp}$ such that,
\begin{equation}
{\rm exp}(sX):=\gamma^{geo}(s),\label{set}
\end{equation}
where $X\in N_p$ and $\gamma^{geo}$ is the unique geodesic with the initial
conditions $p\in M,X\in T_pM$. Now consider  arbitrary $\gamma_1, \gamma_2\in {\mathcal U}_{\gamma_0}$,
 then by \eqref{nornrionpath}, for any $t\in [0,1]$, $\gamma_1(t), \gamma_2(t)\in U_{\gamma_{0}(t)}$.
Hence for each $t\in[0,1],$ $\gamma_1(t)$ can be joined to $\gamma_{2}(t)$ by a unique geodesic, say
$\gamma_{t}^{geo}(s)$. So we have a path on path space joining $\gamma_1$ and $\gamma_2$, with the following properties
\begin{eqnarray}
\Gamma:[a,b]\times [0,1]\rightarrow M\\
\Gamma:(s,t)\mapsto\gamma_{t}^{geo}(s)\label{geomapstogeo}.
\end{eqnarray}
Moreover this path on path space $\Gamma$ is such that each path $\Gamma_t=\gamma^{geo}_t$ is a geodesic on $M,$
with starting point $\gamma_{1}(t)\in M$, with some `velocity' $V_t\in T_{\gamma_{1}(t)}M$. Hence from
Proposition~\ref{prop:initialcongeo} we conclude $\Gamma$ to be the unique geodesic starting from $\gamma_{1}\in \mpM$ with `velocity' $V\in T_{\gamma_1} \mpM$. Hence any $\gamma_1\in {\mathcal U}_{\gamma_0}$ can be joined to $\gamma_2$ by a unique geodesic.
\end{proof}
 
\begin{prop}\label{prop:equitopo}
Suppose  a geodesic $\Gamma^{geo}:[a,b]:\rightarrow \mpM$ exists between $\gamma_1$ and $\gamma_2$. Assume each geodesic
$\Gamma^{geo}_t:[a,b]\rightarrow M$  on $M$ to be minimizing, then length of $\Gamma^{geo}$ is given by
$$
L{(\Gamma^{geo})}={\widetilde {\rm d}}(\gamma_1,\gamma_2),
$$
and hence $\Gamma^{geo}$ is minimizing.
\end{prop}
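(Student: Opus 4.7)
The plan is to compare the length of $\Gamma^{geo}$ with $\sqrt{2|b'-a'|}\sqrt{E_\Gamma}$ for an arbitrary competitor $\Gamma:[a',b']\to\mpM$ joining $\gamma_1$ and $\gamma_2$, by reducing the comparison pointwise to $M$ and using the minimizing hypothesis on each $\Gamma^{geo}_t$.

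First I would record the usual Cauchy--Schwarz inequality linking length and energy on both levels. For any $\Gamma:[a',b']\to\mpM$,
\begin{equation*}
L(\Gamma)^2\le 2(b'-a')\,E_{\Gamma},
\end{equation*}
with equality iff $\widetilde g(\Gamma'(s),\Gamma'(s))$ is constant in $s$; and analogously on $M$, for each fixed $t$,
\begin{equation*}
L(\Gamma_t)^2\le 2(b'-a')\,\mathcal E_{\Gamma_t},
\end{equation*}
again with equality iff the transverse curve $\Gamma_t$ has constant speed. Since $\Gamma^{geo}$ is a path-space geodesic, it has constant $\widetilde g$-speed, and by the point-wise characterization each $\Gamma^{geo}_t$ is a geodesic on $M$ and hence of constant $g$-speed; therefore equality holds in both inequalities for $\Gamma^{geo}$.

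Next I would estimate a general $\Gamma$. Using the formula $E_\Gamma=\int_0^1\mathcal E_{\Gamma_t}\,dt$ from \eqref{patheore} together with the pointwise Cauchy--Schwarz bound,
\begin{equation*}
2(b'-a')E_\Gamma=\int_0^1 2(b'-a')\mathcal E_{\Gamma_t}\,dt\ge\int_0^1 L(\Gamma_t)^2\,dt\ge\int_0^1 d_M(\gamma_1(t),\gamma_2(t))^2\,dt,
\end{equation*}
where the last step uses that $\Gamma_t$ is a curve in $M$ from $\gamma_1(t)$ to $\gamma_2(t)$. By the minimizing assumption on each $\Gamma^{geo}_t$, the right-hand side equals $\int_0^1 L(\Gamma^{geo}_t)^2\,dt$. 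For $\Gamma^{geo}$ itself equality holds throughout (constant speed, minimizing transverse geodesics), so $2(b-a)E_{\Gamma^{geo}}=L(\Gamma^{geo})^2=\int_0^1 d_M(\gamma_1(t),\gamma_2(t))^2\,dt$. Taking square roots,
\begin{equation*}
\sqrt{2(b'-a')}\sqrt{E_\Gamma}\ge L(\Gamma^{geo})
\end{equation*}
for every admissible $\Gamma$, hence $\widetilde{\rm d}(\gamma_1,\gamma_2)\ge L(\Gamma^{geo})$. The reverse inequality is immediate by taking $\Gamma=\Gamma^{geo}$ in the infimum defining $\widetilde{\rm d}$, giving $L(\Gamma^{geo})=\widetilde{\rm d}(\gamma_1,\gamma_2)$.

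The only subtle point, which is the main thing to keep track of, is the handling of the interval length $|b'-a'|$ that appears in the definition of $\widetilde{\rm d}$: one must verify that the combined inequality $\sqrt{2(b'-a')E_\Gamma}\ge(\int_0^1 d_M(\gamma_1(t),\gamma_2(t))^2\,dt)^{1/2}$ is reparametrization-invariant so that the infimum over all parametrizations still yields $L(\Gamma^{geo})$. This is indeed the case because the right-hand side is intrinsic to $\gamma_1$ and $\gamma_2$ and does not depend on $[a',b']$; this is exactly why the combination $\sqrt{2|b-a|}\sqrt{E_\Gamma}$ (rather than $E_\Gamma$ alone) is used in \eqref{d2}.
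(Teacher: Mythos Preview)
Your argument is correct and follows the same route as the paper: apply the Cauchy--Schwarz inequality relating length and energy on both $M$ and $\mpM$, use that the transverse geodesics $\Gamma^{geo}_t$ are minimizing so that $\mathcal L(\Gamma^{geo}_t)={\rm d}(\gamma_1(t),\gamma_2(t))$, and combine via $E_\Gamma=\int_0^1\mathcal E_{\Gamma_t}\,dt$. If anything, your version is slightly more explicit than the paper's at the step where one must check that the infimum defining $\widetilde{\rm d}$ is actually attained at $\Gamma^{geo}$: the paper asserts \eqref{d2expli} directly, while you derive the lower bound $\sqrt{2(b'-a')E_\Gamma}\ge L(\Gamma^{geo})$ for every competitor before taking the infimum.
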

\begin{proof}Each $\Gamma^{geo}_t:[a,b]\rightarrow M$ is a geodesic between $\gamma_{1}(t)$
and $\gamma_2(t)$ for each $t\in [0,1]$. From Cauchy-Schwarz inequality, for any $\Gamma_t:[a,b]\rightarrow M,$ we have
\begin{equation}
\left({\mathcal L}(\Gamma_t)\right)^2\leq 2|b-a|{\mathcal E}_{\Gamma_t},\label{cauchyM}
\end{equation}
where ${\mathcal L}(\Gamma_t)$ is the arc length of $\Gamma_t$. The equality holds only for a geodesic. Hence
\begin{equation}
\left({\mathcal L}(\Gamma^{geo}_t)\right)^2 = 2|b-a|{\mathcal E}_{\Gamma^{geo}_t}.\label{cauchyMgeo}
\end{equation}
For minimizing geodesics, ${\rm d}(\gamma_1(t),\gamma_2(t))={\mathcal L}(\Gamma^{geo}_t),$ where ${\rm d}$ is the distance function on $M$,
 and since each $\Gamma^{geo}_{t}$ is minimizing, 
as a consequence of  \eqref{d2} and \eqref{patheore} it follows that
\begin{equation}
\left({\widetilde {\rm d}}(\gamma_1,\gamma_2)\right)^2=2|b-a|\int_{0}^{1}{\mathcal E}_{\Gamma^{geo}_{t}}dt\label{d2expli}
\end{equation}
and from \eqref{cauchyMgeo}
\begin{equation}\label{d2expliind}
\begin{split}
&\left({\widetilde {\rm d}}(\gamma_1,\gamma_2)\right)^2=\int_{0}^{1}\left({\mathcal L}(\Gamma^{geo}_t)\right)^2dt,\\
&\Rightarrow \left({\widetilde {\rm d}}(\gamma_1,\gamma_2)\right)^2=\int_{0}^{1}\left({\rm d}(\gamma_1(t),\gamma_2(t))\right)^2dt.
\end{split}
\end{equation}
Now applying Cauchy-Schwarz inequality on the path space, we have
$$\left(L({\Gamma})\right)^2 \leq 2|b-a| E_{\Gamma}.$$
If $\Gamma$ is a geodesic, the equality holds. Hence
\begin{equation}
\left(L{(\Gamma^{geo})}\right)^2 = 2|b-a| E_{\Gamma^{geo}}.\label{geocauchypath}
\end{equation}
So from \eqref{d2expli}, \eqref{d2expliind} we get
\begin{equation}
\left(L{(\Gamma^{geo})}\right)^2 =\left({\widetilde {\rm d}}(\gamma_1,\gamma_2)\right)^2.\label{geolength}
\end{equation}
\end{proof}
On the other hand, we have seen in Proposition~\ref{prop:joininggeoinnormal} that any $\gamma_1, \gamma_2\in {\mathcal U}_{\gamma_0}$ can be
joined by a unique path space geodesic $\Gamma^{geo}$ and hence, each $\Gamma_t^{geo}$ is a geodesic 
between $\gamma_{1}(t)$ and $\gamma_2(t)$. But from Proposition~\ref{prop:existencenormalne} we know, if $\gamma\in {\mathcal U}_{\gamma_0}$ then
$\gamma(t)\in U_{\gamma_0(t)}$, for each $t$. Thus for each $t$, $\gamma_1(t)$ can be joined with $\gamma_2(t)$ by a unique minimizing geodesic and finally
according to Proposition~\ref{prop:equitopo} that gives a unique minimizing path space geodesic lying in ${\mathcal U}_{\gamma_0}$. Hence
\begin {cor}\label{cor:minigeo}
Any $\gamma_1, \gamma_2\in {\mathcal U}_{\gamma_0}$ can be joined by a minimizing path space geodesic lying in ${\mathcal U}_{\gamma_0}$. Thus 
${\mathcal U}_{\gamma_0}$ is convex.
\end{cor}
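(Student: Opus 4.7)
The plan is to bootstrap Propositions~\ref{prop:joininggeoinnormal}, \ref{prop:existencenormalne}, and \ref{prop:equitopo} in the obvious order: obtain a candidate path-space geodesic between the given endpoints, verify that each transverse slice is a minimizing geodesic on $M$, and then promote that $t$-wise minimality to minimality in ${\mathcal P}M$.

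First, I would fix $\gamma_1,\gamma_2\in {\mathcal U}_{\gamma_0}$. Proposition~\ref{prop:joininggeoinnormal} supplies a unique path-space geodesic $\Gamma^{geo}:[a,b]\to {\mathcal P}M$ joining them and lying in ${\mathcal U}_{\gamma_0}$, and for each $t\in[0,1]$ the transverse curve $\Gamma^{geo}_t:[a,b]\to M$ is a geodesic between $\gamma_1(t)$ and $\gamma_2(t)$ that stays in $U_{\gamma_0(t)}$ by Proposition~\ref{prop:existencenormalne}.

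Next, I would invoke the classical fact that every point of $M$ admits a geodesically convex normal neighbourhood (Whitehead's theorem), and tacitly take $U_{\gamma_0(t)}$ to be such. Then the unique geodesic inside $U_{\gamma_0(t)}$ joining $\gamma_1(t)$ to $\gamma_2(t)$ is the unique minimizing geodesic on $M$ between those endpoints, so the hypothesis of Proposition~\ref{prop:equitopo} is verified for every $t$. That proposition then yields $L(\Gamma^{geo})={\widetilde{\rm d}}(\gamma_1,\gamma_2)$, so $\Gamma^{geo}$ is minimizing; since it still lies in ${\mathcal U}_{\gamma_0}$, convexity of ${\mathcal U}_{\gamma_0}$ with respect to ${\widetilde{\rm d}}$ follows.

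The main obstacle, modest as it is, is the slight sleight of hand with the word \emph{normal neighbourhood} in Proposition~\ref{prop:existencenormalne}: what is really needed is a \emph{convex} normal neighbourhood on $M$, so that the $t$-wise minimizing geodesic actually remains inside $U_{\gamma_0(t)}$ and does not wander out and lose the minimization property. Once that strengthening is in place, everything else is pointwise and mechanical, since $\widetilde\nabla$ is defined by pointwise evaluation of $\nabla$ and $\mathrm{Exp}$ acts pointwise in $t$.
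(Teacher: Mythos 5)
Your proof follows the paper's own argument exactly: Proposition~\ref{prop:joininggeoinnormal} supplies the unique path-space geodesic, Proposition~\ref{prop:existencenormalne} places each transverse slice $\Gamma^{geo}_t$ inside a normal neighbourhood of $M$ where it is the unique minimizing geodesic between $\gamma_1(t)$ and $\gamma_2(t)$, and Proposition~\ref{prop:equitopo} converts this $t$-wise minimality into $L(\Gamma^{geo})={\widetilde{\rm d}}(\gamma_1,\gamma_2)$. Your explicit appeal to Whitehead's theorem to take each $U_{\gamma_0(t)}$ geodesically convex is a point the paper leaves implicit (it simply asserts that the $t$-wise geodesics are minimizing and stay in $U_{\gamma_0(t)}$), so this is a worthwhile clarification rather than a genuinely different route.
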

This completes the proof of Theorem~\ref{theo:normalneighbour}.

\section{Double category of the geodesics on the path space}\label{s:double}
Let $M$ be a Riemannian manifold and $\mpM$ be the space of $C^{\infty}([0, 1], M)$ maps. Consider the set of $C^{\infty}([0, 1], M)$ maps 
which are constants near the end points.  Below we will provide a precise definition of the same. We denote such a space as 
$$\mpM_{c}\subset \mpM.$$
We say  a path is constant near the end points, when there exists some $\delta >0$ such that for  $t_0\in [0,1]$ and $\gamma\in \mpM$,
the maps $\gamma|_{[0,t_0]}$ and $\gamma|_{[t_0, 1]}$ are constant maps whenever $t_0<\delta$ or $1-t_0<\delta.$
The purpose of introducing such a condition is to ensure that `composition' of two smooth paths remain a smooth path. Let  
$\gamma_1, \gamma_2\in \mpM$ and $\gamma_2(0)=\gamma_1(1)$, then by the composed path $\gamma_2\circ \gamma_1$ we mean 
\begin{eqnarray}
&\left(\gamma_2\circ\gamma_1\right)(t)&=\gamma_1(2t), \qquad 0\leq t\leq \frac{1}{2}\nonumber\\
&&=\gamma_2(2t-1),\qquad  \frac{1}{2}<t\leq 1.\nonumber
\end{eqnarray}

Now, we will impose an equivalence relation on $\mpM_c$, namely {\em back-track equivalence}. 
We refer to \cite{levy, saikat4} for a detail discussion on the topic. Note there exists a similar, but slightly  more 
general notion of equivalence under {\em thin homotopy}\cite{brown}, which we will not discuss here. Roughly two paths $\gamma_1$, $\gamma_2$
are back-track equivalent if there exists a path $\gamma_0$ such that
$$\gamma_1\circ (\gamma_0 \circ \gamma_0^{-1})=\gamma_2.$$
Here and onwards the reverse of  a map $\lambda:[a, b]\rightarrow M$ is given by
\begin{eqnarray}
&&\lambda^{-1}:[a, b]\rightarrow M, \\\nonumber
&&\lambda^{-1}(t_0):=\gamma(b+a-t_0), \qquad t_0\in [a, b]\nonumber.
\end{eqnarray}
Let us make the statement more precise. A path $\gamma:[0, 1]\rightarrow M$ is said to be back-tracked
over $[T, T+\sigma]$, where $[T, T+2\sigma]\subset [0, 1]$, if 
\begin{equation}\label{backtrack}
\gamma(T+u)=\gamma(T+2\sigma-u), \qquad \forall u\in [0, \sigma],
\end{equation}
and, by {\em back-track erasing} the portion $[T, T+\sigma],$  we obtain the map:
$$[0, 1-2\sigma]\rightarrow M$$
given by 
\begin{equation}\label{baera}
\begin{split}
t\mapsto &\gamma(t), \qquad t\in [0, T]\\
&\gamma(t-2\sigma)\qquad t\in [T+2\sigma, 1].
\end{split}
\end{equation}
Let us identify two paths $\gamma_1, \gamma_2$ under reparametrization; that is if there exists a strictly increasing map 
$\phi:[0,1]\rightarrow [0,1], \phi(0)=0, \phi(1)=1$ and
$\gamma_1=\gamma_2\phi,$ then $\gamma_1, \gamma_2$ are equivalent. Now, we define two paths $\gamma_1, \gamma_2$ to be 
{\em elementary back-track equivalent}, if there are $C^{\infty}$ maps 
\begin{equation}\label{lammaps}
\begin{split}
&\lambda_3:[0, T]\rightarrow M, \\
&\lambda_2:[T, T+\sigma] \rightarrow M, \\
&\lambda_1:[T+2\sigma, 1] \rightarrow M, 
\end{split}
\end{equation}
 such that
\begin{equation}\label{bacequ}
\begin{split}
&\gamma_1\phi_1=\lambda_1\circ \lambda_2\circ{\lambda_2}^{-1}\circ \lambda_3,\\
&\gamma_2\phi_2=\lambda_1\circ \lambda_3,
\end{split}
\end{equation}
for some strictly increasing $\phi_1:[0, 1]\rightarrow [0, 1], \qquad \phi_1(0)=0, \phi_1(1)=1$ and $\phi_2:[0, T-2\sigma]\rightarrow [0, 1], \qquad \phi_2(0)=0, \phi_2(T-2\sigma)=1.$ \eqref{bacequ} can be stated as,  $\gamma_1$ is obtained from $\gamma_2$ by erasing the back-track part $\lambda_2\circ \lambda_2^{-1}.$ 
Now, if there is a sequence of paths $\gamma_1,\gamma_2, \cdots, \gamma_n$ such that  $\gamma_i$ is elementary back-track equivalent to $\gamma_{i+1},i=[1, n-1],$
then we call $\gamma_1, \gamma_n$ to be {\em back-track equivalent}. We denote it as
$$\gamma_1\simeq_{bt}\gamma_n.$$
It can be shown back-track equivalence has following properties~\cite{saikat3}
\begin{itemize}
\item {The back-track equivalence is preserved under reparametrization.}
\item {If $\gamma_1\simeq_{bt}\gamma_2$, ${\tilde \gamma}_1\simeq_{bt}{\tilde \gamma}_2$ and $\gamma_1, {\tilde \gamma}_1$ are composable, then so is  
$\gamma_2, {\tilde \gamma}_2$, more over in that case 
\begin{equation}\label{backcompo}
\gamma_1\circ {\tilde \gamma}_1\simeq_{bt} \gamma_2\circ {\tilde \gamma}_2.
\end{equation} }
\end{itemize}
Now, define the quotient space under the back-track equivalence relation:
\begin{equation}\label{btspace}
{\mpM}^{bt}_c:=\mpM_c/{\simeq_{bt}}.
\end{equation}
We will not notationally distinguish between elements of ${\mpM}^{bt}_c$ and $\mpM_c$, that is $\gamma \in {\mpM}^{bt}_c$ would actually mean
the equivalence class $[\gamma]_{\simeq_{bt}}.$

Recall a tangent vector $X\in T_{\gamma}\mpM$ is given by a smooth vector field  $X(t)\in T_{\gamma(t)}M, t\in [0, 1]$ along $\gamma$.
\begin{enumerate}
\item{We define a vector $X\in T_\gamma \mpM_c$ to be a vector field along $\gamma$ such that it is constant near the end points $0, 1.$ }

\item {We define a vector $X\in T_{\gamma}\mpM^{bt}_{c}$ to be a vector field along the path $\gamma$ which has the property (1) and
 back-track of $\gamma$ coincides with that of $X.$ That is, if $\gamma$ has a back-track in $[T, T+\sigma]$ as defined in \eqref{backtrack}, then 
\begin{equation}\label{backtrackvect}
X(T+u)=X(T+2\sigma-u), \qquad \forall u\in [0, \sigma].
\end{equation}
}
\end{enumerate}
We have seen in Proposition~\ref{prop:initialcongeo} that given a $ \mpM\ni \gamma:[0, 1]\rightarrow M$ and 
a $T_{\gamma}\mpM\ni v:[0,1]\rightarrow T_{{\gamma}(t)}M$ we have a unique 
geodesic 
\begin{eqnarray}
&\Gamma^{geo}&:[a,b]\rightarrow \mpM\nonumber\\
&&[a,b]\times [0,1]\rightarrow M\nonumber
\end{eqnarray}
on the path space. This unique geodesic has following description:
Each transverse path $\Gamma^{geo}_t:[a, b]\rightarrow M, t\in [0,1]$ is a geodesic with initial conditions  
\begin{itemize}
\item[(i)] $\Gamma^{geo}_t(0)=\gamma(t)$
\item[(ii)] $\frac {\partial \Gamma^{geo}_t(s)}{\partial s}|_0=v(t).$
\end{itemize}
Thus we infer:
\begin{prop}\label{pr:bactrageo}
Suppose $\gamma\in \mpM_c$ and $T_{\gamma}\mpM_c\ni v:[0,1]\rightarrow T_{{\gamma}(t)}M$ have back-tracking 
in $[T, T+\sigma],$ and $\Gamma:[a, b]\rightarrow \mpM$ is the unique geodesic with initial conditions
$\gamma, v$. Then the longitudinal path defined by $$\Gamma^s:[0,1]\rightarrow M, \Gamma^{s}(t)=\Gamma(s, t)$$
satisfies
\begin{enumerate}
\item  {for each $s\in [a, b],$ $\Gamma^s$ has the back-tracking in $[T, T+\sigma]$}
 and 
\item {$\Gamma^s\in \mpM_c$ for each $s\in[a, b]$}.
\end{enumerate}
\end{prop}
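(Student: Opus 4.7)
The plan is to deduce both claims directly from the pointwise description of the geodesic $\Gamma$ given by Proposition~\ref{prop:initialcongeo}: for each fixed $t\in[0,1]$, the transverse curve $\Gamma_t:[a,b]\to M$ is the unique geodesic on $M$ with $\Gamma_t(0)=\gamma(t)$ and $\Gamma'_t(0)=v(t)$. Thus the longitudinal values $\Gamma^s(t)=\Gamma_t(s)$ are entirely controlled by the pair $(\gamma(t),v(t))$, and everything follows from the uniqueness of geodesics on $M$ together with the way the hypotheses constrain these initial data as $t$ varies.

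For part~(1), I would read off the back-tracking hypotheses on $\gamma$ and $v$ over $[T,T+\sigma]$ as the palindromic identities $\gamma(T+u)=\gamma(T+2\sigma-u)$ and $v(T+u)=v(T+2\sigma-u)$ for all $u\in[0,\sigma]$; these are well-posed because the base points $\gamma(T+u)$ and $\gamma(T+2\sigma-u)$ agree, so the tangent-vector condition \eqref{backtrackvect} is an identity in a single fibre of $TM$. The two transverse geodesics $\Gamma_{T+u}$ and $\Gamma_{T+2\sigma-u}$ then satisfy the same pair of initial conditions on $M$, so uniqueness on $M$ forces them to coincide as maps $[a,b]\to M$. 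Evaluating at an arbitrary $s$ yields $\Gamma^s(T+u)=\Gamma^s(T+2\sigma-u)$ for every $s\in[a,b]$, which is precisely the definition \eqref{backtrack} of back-tracking for $\Gamma^s$ on $[T,T+\sigma]$.

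For part~(2), the constancy-near-endpoints of $\gamma\in\mpM_c$ and of $v\in T_\gamma\mpM_c$ provides a $\delta>0$ such that both $t\mapsto\gamma(t)$ and $t\mapsto v(t)$ are constant on $[0,\delta]$ (and similarly on $[1-\delta,1]$). For $t$ in these subintervals the initial data $(\gamma(t),v(t))$ are independent of $t$, so by uniqueness every transverse geodesic $\Gamma_t$ coincides with one fixed geodesic on $M$; evaluating at $s$ gives that $\Gamma^s(t)=\Gamma_t(s)$ is constant in $t$ on $[0,\delta]$ and on $[1-\delta,1]$, i.e.\ $\Gamma^s\in\mpM_c$.

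There is very little obstacle beyond organizing these observations. The only mild subtlety is interpreting the equality $v(T+u)=v(T+2\sigma-u)$, and its analogue near the endpoints, since $v(t)$ lives in the fibre $T_{\gamma(t)}M$; but the palindromic equality $\gamma(T+u)=\gamma(T+2\sigma-u)$ (respectively the constancy of $\gamma$ near the endpoints) makes the fibres coincide and the identities literal. Both conclusions then reduce to the uniqueness half of the existence-and-uniqueness theorem for geodesics on $M$, lifted pointwise to $\mpM$ via the definition \eqref{connection} of $\widetilde{\nabla}$, so no genuine path-space argument beyond Proposition~\ref{prop:initialcongeo} is required.
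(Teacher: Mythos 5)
Your argument is correct and is essentially the paper's own proof: both deduce parts (1) and (2) from the fact that each transverse curve $\Gamma_t$ is the unique geodesic on $M$ determined by $(\gamma(t),v(t))$, so coinciding initial data at $t=T+u$ and $t=T+2\sigma-u$ (respectively, constant initial data near the endpoints) force the corresponding transverse geodesics to coincide. You merely spell out part (2) and the fibre-identification point more explicitly than the paper, which dismisses (2) as a ``similar argument.''
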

\begin{proof}
By Proposition~\ref{prop:initialcongeo} each transverse path $\Gamma_t:[a, b]\rightarrow M, t\in [0,1]$ is a 
geodesic with initial conditions $(\gamma(t), v(t)).$ 
\begin{enumerate}
\item{Since $\gamma$ and $v$ have back-track in $[T, T+\sigma]$, by \eqref{backtrack}
\begin{eqnarray}
&\gamma(T+u)=\gamma(T+2\sigma-u), \qquad \forall u\in [0, \sigma],\nonumber\\
&v(T+u)=v(T+2\sigma-u), \qquad \forall u\in [0, \sigma],\nonumber
\end{eqnarray}
and since each geodesic $\Gamma_t:[a, b]\rightarrow M, t\in [0,1]$ is 
 uniquely determined by the initial conditions $\gamma(t), v(t),$
we have same back-track for the paths $\Gamma^{s}:[0,1]\rightarrow M$.}
\item{follows from similar argument.}
\end{enumerate}
\end{proof}
 Proposition~\ref{pr:bactrageo} essentially states that a back-tracking is mapped to a back-tracking under the exponential map
${\rm Exp}.$ Thus we have the following corollary.
\begin{cor}\label{cor:backcompo}
Suppose $\gamma \in \mpM_c$ is obtained by back-track erasing the portion $\gamma_0$ from $\tilde \gamma$, that is there exists $\gamma_1, \gamma_2$ such that 
$\gamma=\gamma_1\circ \gamma_2$ and $\tilde \gamma=\gamma_1\circ\gamma_0\circ \gamma_0^{-1}\circ \gamma_2.$ Let $X\in T_{\gamma}\mpM_c$ 
be obtained by identifying with $X_1$ for the first half and with $X_2$ with the second half, where $X_1, X_2$ are restrictions of the vector field along 
the path $\tilde \gamma$ on $\gamma_1, \gamma_2$ respectively. Let the geodesic obtained from  the initial condition 
$(\gamma_i, X_i), i=1, 2$ is ${}^{i}\Gamma,$ where $X_i$ is the vector field obtained by restricting to the portion $\gamma_i, i=1, 2.$ Let
 ${}^{i}\Gamma^s:[0,1]\rightarrow M, {}^i\Gamma^{s}(t)={}^i\Gamma(s, t)$ be the longitudinal path defined for each $s\in [a, b]$. Then
\begin{equation}\label{backcom}
\Gamma^s={}^1\Gamma^s\circ {}^2\Gamma^s,
\end{equation}
where $\Gamma$ is the geodesic obtained from the initial condition $(\gamma, X).$
\end{cor}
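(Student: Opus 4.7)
The proof plan rests on the pointwise-in-$t$ nature of path-space geodesics established in Proposition~\ref{prop:initialcongeo}: the geodesic $\Gamma$ with initial data $(\gamma, X)$ is characterized by the property that for every $t \in [0,1]$ the transverse curve $\Gamma_t:[a,b]\to M$ is the unique $M$-geodesic with initial conditions $(\gamma(t), X(t))$. The same is true for ${}^1\Gamma$ and ${}^2\Gamma$. So the whole argument should reduce to a pointwise matching of initial data after accounting for the reparametrization built into the composition convention.

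First I would unwind the composition. Writing $\gamma = \gamma_1 \circ \gamma_2$ with the paper's reparametrization rule, $\gamma(t)$ equals $\gamma_1(2t)$ on the first half of $[0,1]$ and $\gamma_2(2t-1)$ on the second, and by hypothesis $X(t)$ equals $X_1(2t)$ and $X_2(2t-1)$ respectively on these two halves. Next, fix $t \in [0, 1/2]$. Then $\Gamma_t$ and ${}^1\Gamma_{2t}$ are both $M$-geodesics whose initial data at $s=a$ coincide, namely $(\gamma_1(2t), X_1(2t))$. Uniqueness of solutions of the geodesic ODE on $M$ forces $\Gamma(s,t) = {}^1\Gamma(s, 2t)$ for every $s \in [a,b]$. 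A completely symmetric argument on the second half gives $\Gamma(s,t) = {}^2\Gamma(s, 2t-1)$ for $t \in (1/2, 1]$.

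Rewriting this in terms of the longitudinal paths yields, for each fixed $s$, $\Gamma^s(t) = {}^1\Gamma^s(2t)$ on $[0, 1/2]$ and $\Gamma^s(t) = {}^2\Gamma^s(2t-1)$ on $(1/2, 1]$, which is precisely the definition of the composed path ${}^1\Gamma^s \circ {}^2\Gamma^s$, giving \eqref{backcom}.

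The one technical point that must be checked, and the place I expect the only mild obstacle to lie, is that the right-hand side of \eqref{backcom} is itself a well-defined element of $\mpM_c$ for every $s$. The endpoint condition ${}^1\Gamma^s(1) = {}^2\Gamma^s(0)$ is immediate, because at the junction the transverse initial data agree, so the two transverse geodesics coincide for every $s$. Smoothness across the junction follows from the fact that $\gamma_1, \gamma_2 \in \mpM_c$ are constant near their endpoints and $X_1, X_2$ inherit this property from $X \in T_\gamma \mpM_c$; by Proposition~\ref{pr:bactrageo} (applied to the trivial back-tracking near the endpoint) the resulting longitudinal paths ${}^i\Gamma^s$ are themselves constant near the junction parameter, so the concatenation is smooth there and lies in $\mpM_c$ for every $s$.
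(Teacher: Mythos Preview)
Your proposal is correct and follows exactly the mechanism the paper relies on: the paper does not spell out a proof at all but simply labels this a corollary of Proposition~\ref{pr:bactrageo}, whose own proof is precisely the pointwise uniqueness argument from Proposition~\ref{prop:initialcongeo} that you invoke. Your write-up is a faithful unpacking of that one-line deduction, including the well-definedness check in $\mpM_c$, which the paper leaves implicit.
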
 
Proposition~\ref{pr:bactrageo} and Corollary~\ref{cor:backcompo}  imply
\begin{prop}\label{pr:equiva}
Suppose $\gamma_1, \gamma_2\in \mpM_c$,$\gamma_1\simeq_{bt}\gamma_2$ and $X_1\simeq_{bt} X_2$. Let the geodesic obtained from the the initial conditions 
$(\gamma_i, X_i), i=1, 2$ is ${}^{i}\Gamma,$ then 
$${}^{1}\Gamma^s\simeq_{bt}{}^{2}\Gamma^s.$$
\end{prop}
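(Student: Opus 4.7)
The plan is to reduce to a single elementary back-track equivalence and then combine Proposition~\ref{pr:bactrageo} with Corollary~\ref{cor:backcompo}. Since $\simeq_{bt}$ is by definition the transitive closure of elementary back-track equivalences, it is enough to handle one elementary step; the chain of elementary equivalences linking $\gamma_1$ and $\gamma_2$ then produces a chain of equivalences linking ${}^1\Gamma^s$ and ${}^2\Gamma^s$. An elementary step, as recorded in~\eqref{bacequ}, consists of a reparametrization together with a single back-track erasing, and these two ingredients can be treated separately.

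For the reparametrization component, if $\gamma_2 = \gamma_1 \circ \phi$ and $X_2 = X_1 \circ \phi$ for a strictly increasing $\phi:[0,1]\to[0,1]$ fixing the endpoints, then the uniqueness of transverse geodesics in Proposition~\ref{prop:initialcongeo} gives ${}^2\Gamma_t = {}^1\Gamma_{\phi(t)}$, hence ${}^2\Gamma^s = {}^1\Gamma^s \circ \phi$, a reparametrization of the longitudinal paths and in particular a back-track equivalence.

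For the back-track erasing step, write
$$\gamma_1 = \lambda_1 \circ \lambda_2 \circ \lambda_2^{-1} \circ \lambda_3, \qquad \gamma_2 = \lambda_1 \circ \lambda_3,$$
as in~\eqref{bacequ}, and split $X_1$ into four restrictions $Y_1, Y_2, Y_{2'}, Y_3$ along these four pieces; the back-track hypothesis~\eqref{backtrackvect} on $X_1$ is exactly what forces $Y_{2'}$ to be the reverse of $Y_2$. The locality of the geodesic flow, which is the content of Corollary~\ref{cor:backcompo} applied iteratively to this four-piece decomposition, yields
$${}^1\Gamma^s = \Lambda_1^s \circ \Lambda_2^s \circ \Lambda_{2'}^s \circ \Lambda_3^s,$$
where $\Lambda_i^s$ is the longitudinal path of the geodesic produced by $(\lambda_i, Y_i)$. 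Proposition~\ref{pr:bactrageo} applied to the middle pair, whose initial data $(\lambda_2 \circ \lambda_2^{-1}, Y_2\,\mathrm{\,glued\,with\,}Y_{2'})$ carries a back-track in $[T, T+\sigma]$, forces $\Lambda_{2'}^s = (\Lambda_2^s)^{-1}$, so the middle two pieces of ${}^1\Gamma^s$ constitute an explicit back-track. Erasing that back-track leaves $\Lambda_1^s \circ \Lambda_3^s$, which by a single application of Corollary~\ref{cor:backcompo} equals ${}^2\Gamma^s$. Hence ${}^1\Gamma^s \simeq_{bt} {}^2\Gamma^s$.

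The main technical obstacle is checking that the four-piece decompositions of $\gamma_1$ and $X_1$ actually lie in $\mpM_{c}$ at each interior junction, so that each split truly falls under the hypotheses of Corollary~\ref{cor:backcompo}. This is where the constancy-near-endpoints condition defining $\mpM_{c}$ is essential: it guarantees that the concatenations and their inverses are genuinely $C^{\infty}$, and this property is preserved by the geodesic flow because a geodesic with vanishing initial velocity is constant in $s$.
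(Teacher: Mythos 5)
Your proof is correct and follows exactly the route the paper intends: the paper gives no written argument at all, merely asserting that Proposition~\ref{pr:bactrageo} and Corollary~\ref{cor:backcompo} imply the result, and your write-up is precisely the missing elaboration (reduction to a single elementary step, the reparametrization case via uniqueness of the transverse geodesics, and the four-piece decomposition handled by those two cited results). One small aside: the preservation of the constancy-near-endpoints condition is better justified by noting that $\gamma$ and $X$ are constant in $t$ near each junction, so the transverse geodesics there all coincide and hence $\Gamma^s$ is constant in $t$ near the junction --- not by the (true but not relevant) fact that a geodesic with vanishing initial velocity is constant in $s$.
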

Thus if $[\gamma]_{bt}\in \mpM_{c}^{bt}$ and $[X]_{bt}\in T_{\gamma}\mpM_{c}^{bt}$, then we have a $[\Gamma^s]\in \mpM_{c}^{bt}$ for each $s\in [a,b],$ where $\Gamma$ is the geodesic obtained from initial conditions $(\gamma, X),$ (here, to make the distinction clear we write $[\gamma]_{bt}$ and $[X]_{bt}$).
Now we can define a category 
$${\mathbb P}^{bt},$$ 
whose objects are points of $M$ and  a morphism is given by
$\gamma\in \mpM_{c}^{bt}$ with source $\gamma(0)$ and target $\gamma(1)$ and composition is given by
$\gamma_2\circ \gamma_1$ (which is well defined by \eqref{backcompo}) and identity morphism at $m\in M$ is the constant path. Note, since 
$\gamma_1, \gamma_2$ are constant maps near the end points $\gamma_2\circ \gamma_1$ is $C^{\infty}$. Also, the general  concatenation of paths is not associative. 
However,  back-track equivalence makes the composition associative (see section 3 and 6 of \cite{saikat4}).  Thus, everything is well defined here.

Next we show geodesics on the path space of a complete Riemannian manifold naturally define a double category. First let us specify 
what we mean by a double category (terminology varies in the literature.) 

By a {\em double category} ${\mathcal C}_{(2)}$ over a category ${\mathcal C},$ we understand a category whose 
objects are the arrows of ${\mathcal C}$ and on which there is a partially-defined binary operation
$$(G,F)\mapsto G\circ_H F$$
 for certain pairs of morphisms $F, G\in {\rm Mor}({\mathcal C}_{(2)})$, satisfying:
\begin{itemize}
\item[(i)] $s(G\circ_H  F)={\rm s}(G)\circ {\rm s}(F)$ and ${\rm t}(G\circ_H F)={\rm t}(G)\circ {\rm t}(F)$, whenever $G\circ_H F$ is defined;
\item[(ii)] the exchange law
$$(G'\circ G)\circ_H (F'\circ F)=(G'\circ_H F')\circ (G\circ_HF),$$
holds whenever either side is defined, where $\rm {s, t}$ are source and target maps respectively.
\end{itemize}

 Assume $M$ to be complete. Then, by Proposition~\ref{prop:completeness} $\mpM$ is also complete. 
Now let us define a category 
\begin{equation}\label{cat1}
{\rm \mathbf C}^{\rm geod},
\end{equation}
which has following description. An object in ${\rm \mathbf C}^{\rm geod}$ is given by a triplet, $(p, X, a),$ of
 a point in $p\in M$, a tangent vector $X\in T_{p}M$, and an element $a\in \mathbb R$. A morphism is specified by another triplet, $(\gamma, \tilde X, a)$, of a path $\gamma\in \mpM_c^{bt}$, a vector field $\tilde X\in T_{\gamma}\mpM_c^{bt}$,  an element $a\in \mathbb R$. The source and target of a morphism 
$f=(\gamma, \tilde X, a)\in {\rm Mor}({\rm \mathbf C}^{\rm geod})$
are respectively given by
\begin{eqnarray}
{\rm s}(\gamma, \tilde X, a)=(\gamma(0), {\tilde X}(0), a),\hskip 0.1 cm {\rm and} \hskip 0.1 cm {\rm t}(\gamma, \tilde X, a)=(\gamma(1), {\tilde X}(1), a)\label{st1}
\end{eqnarray}
and the composition reads
\begin{equation}
(\gamma_2, \tilde X_2, a)\circ (\gamma_1, \tilde X_1, a):=(\gamma_2\circ \gamma_1, \tilde X_2\circ \tilde X_1, a),\label{1comp}
\end{equation}
where $\gamma_2\circ \gamma_1$ is the composition in the category ${\mathbb P}^{bt}$ and $\tilde X_2\circ \tilde X_1\in T{\gamma_2\circ \gamma_1}\mpM$
is the smooth vector field along $\gamma_2\circ \gamma_1,$ given by point wise identification with $\tilde X_1$ for the first half and $\tilde X_2$ for the second half.
 Note, as per the composition law in \eqref{1comp}, we must have the composability condition  $\gamma_2(0)=\gamma_1(1)$ and $\tilde X_2(0)=\tilde X_1(1).$
 That means we have (an equivalence class of) smooth  non degenerate $\tilde X_2\circ \tilde X_1$ along $\gamma_2\circ \gamma_1.$ The identity morphism $1_{p, X, a}$ corresponding to $(p, X, a)$ is simply the pair of constant maps $[0,1]\to p$, $[0,1]\to X$ and $a\in {\mathbb R}$. It can be verified that composition in \eqref{1comp} is associative~\cite{saikat4}.

By assumption $M$ is complete, and thus by Proposition~\ref{prop:completeness}, ${\mathcal P}M$ is also complete. Therefore, we may take any arbitrary interval $[a, b]\subset {\mathbb R}$ to define a geodesic segment.. Now let $\Gamma^{(\gamma, \tilde X)}$ be the geodesic on the path space obtained from the initial conditions $\gamma, \tilde X.$ We choose an interval $[a,b]$ and denote the geodesic segment on this interval by 
$${}_{[a,b]}\Gamma^{(\gamma, \tilde X)}.$$
Let us define following source-target maps respectively,
\begin{equation}\label{2st}
\begin{split}
&{\rm S}({}_{[a,b]}\Gamma^{(\gamma, \tilde X)}):=(\lambda_a, {\tilde Y}_a, a), \hskip 0.1 cm {\rm and}\hskip 0.1 cm  {\rm T}({}_{[a,b]}\Gamma^{(\gamma, \tilde X)}):=(\lambda_b, {\tilde Y}_b, b),\\
&{\rm where}\\
&\lambda_a:=\Gamma^{(\gamma, \tilde X)}(a)\\
&{\tilde Y}_a(t):=\frac{\partial \Gamma^{(\gamma, \tilde X)}_t(s)}{\partial s}|_a\\
&\lambda_b:=\Gamma^{(\gamma, \tilde X)}(b)\\
&{\tilde Y}_b(t):=\frac{\partial \Gamma^{(\gamma, \tilde X)}_t(s)}{\partial s}|_b
\end{split}
\end{equation}
Suppose $\gamma_1\simeq_{bt} {\tilde \gamma}_1$ and  $X_1\simeq_{bt} {\tilde X}_1$ , then by Proposition~\ref{pr:equiva}
$$\Gamma^s\simeq_{bt}{\tilde \Gamma}^s.$$
Hence we may as well assume $\gamma\in \mpM_c^{bt}$ and $X\in T_{\gamma}\mpM_c^{bt}.$ From now on we will always work assuming this back-track identification of paths.
 Now suppose ${}_{[b,c]}\Gamma^{(\gamma_2, \tilde X_2)}$ and ${}_{[a,b]}\Gamma^{(\gamma_1, \tilde X_1)}$ are two geodesic segments obtained from the respective initial conditions $\left(\gamma_2\in \mpM_c^{bt}, \tilde X_2\in T_{\gamma_2}\mpM_c^{bt}\right)$ and 
$\left(\gamma_1\in \mpM_c^{bt}, \tilde X_1\in T_{\gamma_1}\mpM_c^{bt}\right),$  defined on the intervals $[b, c]$ and $[a, b]$ respectively. Further assume, 
$${\rm S}({}_{[b,c]}\Gamma^{(\gamma_2, \tilde X_2)})={\rm T}({}_{[a,b]}\Gamma^{(\gamma_1, \tilde X_1)}).$$
For each $t\in[0,1]$ the above equation implies 
\begin{equation}
\Gamma_t^{(\gamma_2, \tilde X_2)}(b)= \Gamma_t^{(\gamma_1, \tilde X_1)}(b)\label{samepath}
\end{equation}
and 
\begin{equation}
\frac{\partial \Gamma_t^{(\gamma_2, \tilde X_2)}(s)}{\partial s}|_b =\frac{\partial  \Gamma_t^{(\gamma_1, \tilde X_1)}(s)}{\partial s}|_b \label{samedir}.
\end{equation}
Since $\Gamma_t^{(\gamma_1, \tilde X_1)}$ (respectively  $\Gamma_t^{(\gamma_2, \tilde X_2)}$ ) is a geodesic, $\frac{\partial  \Gamma_t^{(\gamma_1, \tilde X_1)}(s)}{\partial s}|_b$ (respectively  $\frac{\partial  \Gamma_t^{(\gamma_2, \tilde X_2)}(s)}{\partial s}|_b$) is parallel to $\Gamma_t^{(\gamma_1, \tilde X_1)}$ (respectively $\Gamma_t^{(\gamma_2, \tilde X_2)}$ ), thus by \eqref{samedir} the geodesic ${}_{[b,c]}\Gamma_t^{(\gamma_2, \tilde X_2)}$ is a geodesic in the same direction as ${}_{[a,b]}\Gamma_t^{(\gamma_2, \tilde X_2)}$. 
Now we can define a path segment in the interval $[a, c]$ as follows
\begin{eqnarray}
&\bigl({}_{[b,c]}\Gamma_t^{(\gamma_2, \tilde X_2)}\star {}_{[a,b]}\Gamma_t^{(\gamma_1, \tilde X_1)}\bigr)(s)&={}_{[a,b]}\Gamma_t^{(\gamma_1, \tilde X_1)}(s), \qquad a\leq s\leq b\nonumber\\
&&={}_{[b,c]}\Gamma_t^{(\gamma_2, \tilde X_2)}(s), \qquad b< s\leq c\label{starcom}.
\end{eqnarray}
But, from \eqref{samedir} it follows that above composition defines a geodesic segment in the interval $[a, c]$ with initial conditions $(\gamma_1(t), \tilde X_1(t))$, and since the relation holds for each $t\in [0,1],$ we have a necessary condition:
$$(\gamma_1, \tilde X_1)=(\gamma_2, \tilde X_2)$$
and therefore we have 
\begin{equation}
{}_{[b,c]}\Gamma_t^{(\gamma_1, \tilde X_1)}\star {}_{[a,b]}\Gamma_t^{(\gamma_1, \tilde X_1)}={}_{[a,c]}\Gamma_t^{(\gamma_1, \tilde X_1)}.\label{geocat}
\end{equation}
 In other words the composition  is just extension of the geodesic segment  
${}_{[a,b]}\Gamma_t^{(\gamma_1, \tilde X_1)}$ from the interval $[a,b]$ to 
$[a, c]$. So, we can define 
\begin{eqnarray}
&\bigl({}_{[b,c]}\Gamma^{(\gamma_1, \tilde X_1)}\star {}_{[a,b]}\Gamma^{(\gamma_1, \tilde X_1)}\bigr):&[a, c]\rightarrow \mpM\nonumber\\
&&[a, c]\ni s\mapsto {}_{[a,c]}\Gamma_t^{(\gamma_1, \tilde X_1)}(s).\label{2comp}
\end{eqnarray}
It is obvious that the above composition is associative. We define the identity morphism ${\mathbf 1}_{(\gamma, {\widetilde X}, a)}$ by 
$[a,a]\to \{\gamma\}$. Thus we have a category whose objects are given by $({\gamma, {\widetilde X}, a})$, a morphism is given by ${}_{[a,b]}\Gamma^{(\gamma_1, \tilde X_1)}$ with source-target given by \eqref{2st} and composition by \eqref{2comp}. We denote this category as
\begin{equation}\label{cat2}
{\mathbf C}^{\rm geod}_{(2)}.
\end{equation} 
The partial product $\star_{\rm H}$ is defined as follows. Consider ${}_{[a,b]}\Gamma^{(\gamma_1, \tilde X_1)}$ and  ${}_{[a,b]}\Gamma^{(\gamma_2, \widetilde X_2)}$, such that $\gamma_1(1)=\gamma_2(0)$ and $\tilde X_1(1)=\tilde X_2(0)$. Then, since each $\Gamma_t^{(\gamma_1, \tilde X_1)}$ (respectively $\Gamma_t^{(\gamma_2, \widetilde X_2)}$) is a geodesic uniquely determined by initial conditions $(\gamma_1(t), {\widetilde X}_1(t))$ (respectively $(\gamma_2(t), {\widetilde X}_2(t))$), we have
\begin{equation}
\begin{split}\label{hormat}
&\Gamma_1^{(\gamma_1, \tilde X_1)}=\Gamma_0^{(\gamma_2, \tilde X_2)}\\
\Rightarrow&{}_{[a,b]}\Gamma_1^{(\gamma_1, \tilde X_1)}={}_{[a,b]}\Gamma_0^{(\gamma_2, \tilde X_2)}
\end{split}
\end{equation}
Then $\star_{\rm H}$ is defined as 
\begin{equation}\label{horizontal}
{}_{[a,b]}\Gamma^{(\gamma_1, \widetilde X_1)}\star_{\rm H} {}_{[a,b]}\Gamma^{(\gamma_2, \widetilde X_2)}:={}_{[a,b]}\Gamma^{(\gamma_1\circ\gamma_2, \widetilde X_1\circ\widetilde X_2)}.
\end{equation}
Observe that source (respectively target) of ${}_{[a,b]}\Gamma^{(\gamma_1, \tilde X_1)}$ is composable with the source (respectively target) of 
${}_{[a,b]}\Gamma^{(\gamma_2, \tilde X_2)},$  in category ${\mathbf C}^{\rm geod}$ defined in \eqref{1comp}. It is a straightforward  
verification that $\star_{\rm H}$ and $\star$ satify the ``exchange law''
\begin{equation}\label{exchange}
\begin{split}
&\left({}_{[b,c]}\Gamma^{(\gamma_1, \tilde X_1)}\star {}_{[a,b]}\Gamma^{(\gamma_1, \tilde X_1)}\right)\star_{\rm H}\left({}_{[b,c]}\Gamma^{(\gamma_2, \tilde X_2)}\star {}_{[a,b]}\Gamma^{(\gamma_2, \tilde X_2)}\right)\\
=&\left({}_{[b,c]}\Gamma^{(\gamma_1, \tilde X_1)}\star_{\rm H} {}_{[b, c]}\Gamma^{(\gamma_2, \tilde X_2)}\right)\star\left({}_{[a,b]}\Gamma^{(\gamma_1, \tilde X_1)}\star_{\rm H} {}_{[a,b]}\Gamma^{(\gamma_2, \tilde X_2)}\right),
\end{split}
\end{equation}
whenever both sides are well defined. 
\begin{theorem}\label{th:double}
Suppose $M$ be a complete manifold. Let ${\mathbf C}^{\rm geod}$ be the category as described in \eqref{cat1}--\eqref{1comp} and ${\mathbf C}_{(2)}^{\rm geod}$ be the category described in \eqref{2st}--\eqref{cat2} with the partial product $\star_{\rm H}$ defined in \eqref{horizontal}. Then ${\mathbf C}_{(2)}^{\rm geod}$ is a double category over ${\mathbf C}^{\rm geod}$.
\end{theorem}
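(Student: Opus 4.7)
The plan is to verify the two double-category axioms, items (i) and (ii) in the definition preceding the theorem. Given the careful setup already in place, where associativity of $\star$ follows from \eqref{geocat}, well-definedness of $\star_{\rm H}$ via \eqref{hormat} has been checked, and back-track equivalence is preserved under geodesic flow by Proposition~\ref{pr:equiva}, the theorem reduces to two compatibility checks.

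First, I would verify source/target compatibility for $\star_{\rm H}$. For horizontally composable morphisms ${}_{[a,b]}\Gamma^{(\gamma_1, \tilde X_1)}$ and ${}_{[a,b]}\Gamma^{(\gamma_2, \tilde X_2)}$, unwinding \eqref{2st} and \eqref{horizontal} shows the source of their $\star_{\rm H}$-product is $(\gamma_1\circ\gamma_2, \tilde X_1\circ\tilde X_2, a)$, which coincides with the $\mathbf{C}^{\rm geod}$-composition of the individual sources by \eqref{1comp}. For the target, the essential input is Proposition~\ref{prop:initialcongeo}: each transverse geodesic $\Gamma_t^{(\gamma, \tilde X)}$ depends only on the pointwise initial data $(\gamma(t), \tilde X(t))$. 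Consequently the path $\Gamma^{(\gamma_1\circ\gamma_2, \tilde X_1\circ\tilde X_2)}(b)$, viewed as a function of $t \in [0,1]$, splits as the concatenation $\Gamma^{(\gamma_1, \tilde X_1)}(b)\circ \Gamma^{(\gamma_2, \tilde X_2)}(b)$, and the same holds for the derivative component $\tilde Y_b$, yielding exactly ${\rm T}({}_{[a,b]}\Gamma^{(\gamma_1,\tilde X_1)})\circ {\rm T}({}_{[a,b]}\Gamma^{(\gamma_2,\tilde X_2)})$.

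Second, I would verify the exchange law \eqref{exchange}. Starting from the left-hand side, applying \eqref{geocat} in each slot of $\star_{\rm H}$ collapses each vertical pair into a single extended geodesic segment on $[a,c]$, after which \eqref{horizontal} gives ${}_{[a,c]}\Gamma^{(\gamma_1\circ\gamma_2, \tilde X_1\circ\tilde X_2)}$. Starting from the right-hand side, \eqref{horizontal} is applied first at each level, yielding ${}_{[b,c]}\Gamma^{(\gamma_1\circ\gamma_2, \tilde X_1\circ\tilde X_2)}$ and ${}_{[a,b]}\Gamma^{(\gamma_1\circ\gamma_2, \tilde X_1\circ\tilde X_2)}$, which by \eqref{geocat} concatenate to the same segment ${}_{[a,c]}\Gamma^{(\gamma_1\circ\gamma_2, \tilde X_1\circ\tilde X_2)}$. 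The equality is essentially a tautology, reflecting the fact that $\star$ acts only on the parameter $s$ (extending the geodesic interval) while $\star_{\rm H}$ acts only through the parameter $t$ (concatenating initial data), and the two operations are decoupled by the pointwise character of the geodesic on the path space.

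The main obstacle is not algebraic but structural: one must ensure the composed objects remain well-defined smooth elements of the relevant spaces. The concatenation $\tilde X_1\circ\tilde X_2$ must be smooth along $\gamma_1\circ\gamma_2$, which forces the restriction to $\mpM_c$ (paths constant near the endpoints); the invariance of the whole construction under the identifications in $\mpM_c^{bt}$ is exactly what Proposition~\ref{pr:equiva} and Corollary~\ref{cor:backcompo} provide; and the completeness hypothesis on $M$ — invoked via Proposition~\ref{prop:completeness} — allows $[a,b]$ to be chosen freely in $\mathbb R$, ensuring $\star$ is defined on every pair of matched segments. Once these ingredients are lined up, Theorem~\ref{th:double} is a formal consequence.
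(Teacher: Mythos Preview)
Your proposal is correct and follows the paper's own approach: the paper establishes the categorical data in \eqref{cat1}--\eqref{horizontal} and then asserts the exchange law \eqref{exchange} as ``a straightforward verification,'' leaving the theorem without a separate proof environment. You have simply made explicit the two checks the paper leaves implicit---source/target compatibility of $\star_{\rm H}$ via the pointwise nature of path-space geodesics (Proposition~\ref{prop:initialcongeo}), and the exchange law by reducing both sides to ${}_{[a,c]}\Gamma^{(\gamma_1\circ\gamma_2,\tilde X_1\circ\tilde X_2)}$---so there is no substantive difference in strategy.
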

\section{A physical interpretation of the category  ${\mathbf C}_{(2)}^{\rm geod}$}
In string theory we may consider a string to be an oriented arc on some (Riemannian) manifold $M$, given by $\gamma:[0,1]\rightarrow M.$  String
interactions are described by combining two strings to form a third string (see Figure 1), and the combining process can be either via {\em end-to-end} interaction
or {\em overlap interaction}~\cite{stasheff}. 
\begin{figure}[ht]
\begin{center}
\epsfxsize=5in \epsfysize=1.5in
\rotatebox{0}{\epsfbox{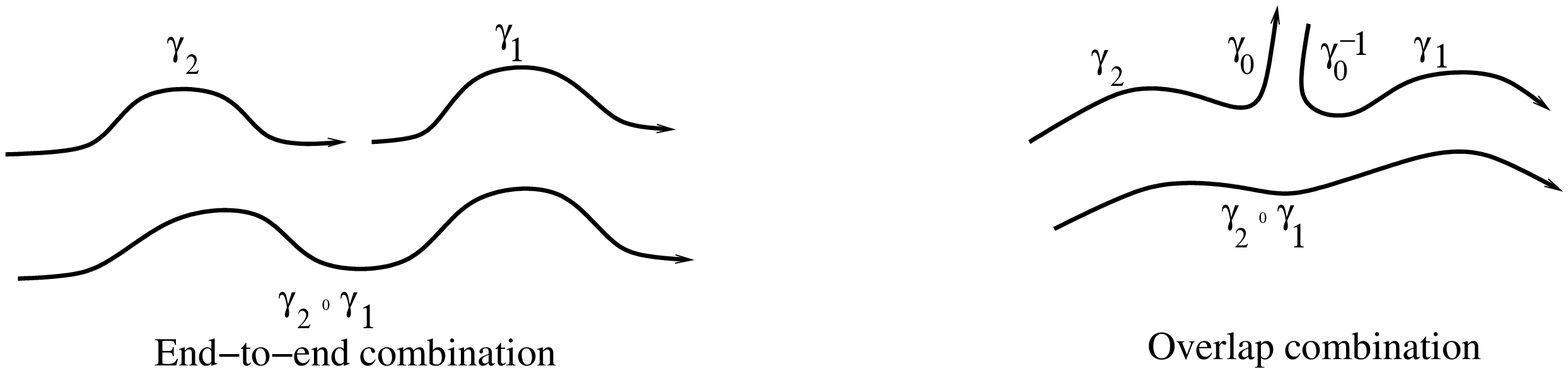}}
\caption{String combinations}
\end{center}
\end{figure}\label{f:inter}

The end-to-end interaction can be described by concatenation of two paths, on the other hand during an overlap
interaction the new string is formed by obliterating the overlapping portions. Now if we recall the back-track erasing method  described in the 
previous section (see \eqref{bacequ}), we immediately see the back-track erasing  essentially describes the overlap interaction. So  
${\mathbb P}^{bt}$ is the category whose objects are points of $M$ and morphisms are strings on $M$. The composition in category ${\mathbb P}^{bt}$
is the interaction of two strings, whereas a morphism in category ${\mathbf C}^{\rm geod}$ is given by a string and  ``velocity"  of the string; the 
element of  $\mathbb R$ present in the morphism of ${\mathbf C}^{\rm geod}$ can be interpreted as an instant of ``time." Thus,  
$(\gamma, \tilde x, a)\in {\rm Mor}({\mathbf C}^{\rm geod})$ can be interpreted as a string $\gamma$ moving with a velocity $\tilde X$ at time $a.$
The category ${\mathbf C}^{\rm geod}$ is slightly more restrictive than category ${\mathbb P}^{bt}$; by \eqref{st1} two morphisms in ${\mathbf C}^{\rm geod}$
are composable when the respective strings are composable in  ${\mathbb P}^{bt}$ and their end points move with the same velocity at a particular instant of time;
 thus by composition in \eqref{1comp} they form a third string moving in a new velocity given by \eqref{1comp}. We will call the two strings {\em interactive} when they are composable in ${\mathbb P}^{bt}$ (i.e. starting-end points coincide) and also they have same velocity at the joining points at any particular time.

Let us now consider the category ${\mathbf C}_{(2)}^{\rm geod}$. A morphism ${}_{[a,b]}\Gamma^{\gamma, \widetilde X}$ in this category is the ``world sheet"
generated between ``time'' $a$ and $b$ by a free moving string $\gamma$ with a velocity $\widetilde X$. The composition $\star$ in \eqref{2comp} implies that the worldsheet generated by the free moving string    $\gamma$ between interval $a$ and $c$ can be decomposed into the worldsheets generated between the intervals $a, b$ and $b, c,$ for some $a\leq b\leq c.$ In other words, we can slice a worldsheet  into worldsheets generated between the intermediate time intervals. On the other hand the partial product or horizontal composition $\star_{\rm H}$ in \eqref{horizontal} has following interpretation. Suppose $\gamma_1,$ $\gamma_2$ are interactive (as defined in the last paragraph) at time $a$. 
Then they also remain interactive at any future instant of time $b$. Moreover, if we consider the third string, say $\gamma_3$, formed by the interaction of $\gamma_1$
, $\gamma_2$ at a time $a$ and the world sheet generated by $\gamma_3$ between time $a, b$, then it is same as ``side ways composition" of two world sheets created by
$\gamma_1$ and $\gamma_2$ between time $a, b.$ Lastly, the exchange law in \eqref{exchange} ensures the necessary consistency between ``slicing" of world sheets and interaction beween strings.

{\bf{ Acknowledgments.} } Author thanks R. Dey and P. Kumar for useful discussions.  Author thanks the anonymous reviewer for his/her comments on the manuscript. Part of the work towards this paper was done while author was in Institut des Hautes \' Etudes Scientifiques (IH\' ES), France.  Author acknowledges a fellowship from the \textit {Jacques Hadamard Mathematical Foundation} during his stay in IH\'ES.

\medskip








\end{document}